\newtheorem{lemma}{Lemma}
\newtheorem{thm}{Theorem}
\newtheorem{corollary}{Corollary}
\theoremstyle{definition}
\newtheorem{defn}{Definition}
\theoremstyle{remark}
\newtheorem*{remark}{Remark}
\newcommandx{\TODO}[2][1=]{\todo[fancyline,linecolor=red,backgroundcolor=red!25,bordercolor=red,#1]{#2}}
\newcommandx{\quest}[2][1=]{\todo[fancyline,linecolor=blue,backgroundcolor=blue!25,bordercolor=blue,#1]{#2}}
\newcommandx{\missref}[1][1=]{\todo[fancyline,linecolor=OliveGreen,backgroundcolor=OliveGreen!25,bordercolor=OliveGreen,#1]{Reference}}
\newcommandx{\improvement}[2][1=]{\todo[fancyline,linecolor=Plum,backgroundcolor=Plum!25,bordercolor=Plum,#1]{#2}}
\newcommandx{\thiswillnotshow}[2][1=]{\todo[disable,#1]{#2}}
\definecolor{darkred}{rgb}{0.65,0.15,0}
\newcommand{\nats}{\mathbb{N}}
\newcommand{\ints}{\mathbb{Z}}
\newcommand{\reals}{\mathbb{R}}
\newcommand{\rats}{\mathbb{Q}}
\newcommand{\adeles}{\mathbb{A}}
\newcommand{\dash}{\ensuremath{\text{---}}}
\newcommand{\GL}{\operatorname{GL}}
\newcommand{\SL}{\operatorname{SL}}
\newcommand{\SO}{\operatorname{SO}}
\newcommand{\T}{\operatorname{T}}
\newcommand{\qtextq}[1]{{\quad \text{#1} \quad}}
\newcommand{\qtext}[1]{{\quad \text{#1}}}
\begin{document}

\thispagestyle{empty}

\mbox{ }
\vspace{40mm}

\begin{center}
  {\LARGE \bf Global Iwasawa-decomposition of $\SL ( n,\adeles_{\rats} )$ }\\[10mm]

  \vspace{8mm}
  \normalsize
  {\large  Olof Ahl\'en}

  \vspace{10mm}
  {\it Max Planck Institute for Gravitational Physics (Albert Einstein Institute)\\
  Am M\"{u}hlenberg 1, DE-14476 Potsdam, Germany}

  \vspace{15mm}

  \hrule

  \vspace{10mm}

  \begin{tabular}{p{12cm}}
    {\small
      We discuss the Iwasawa-decomposition of a general matrix in $\SL( n, \rats_p )$ and $\SL( n, \reals )$. For $\SL( n, \rats_p )$ we define an algorithm for computing a complete Iwasawa-decomposition and give a formula parameterizing the full family of decompositions. Furthermore, we prove that the $p$-adic norms of the coordinates on the Cartan torus are unique across all decompositions and give a closed formula for them which is proven using induction. For the case $\SL( n, \reals )$, the decomposition is unique and we give formulae for the complete decomposition which are also proven inductively. Lastly we outline a method for deriving the norms of the coordinates on the Cartan torus in the framework of representation theory. This yields a simple formula valid globally which expresses these  norms in terms of the vector norms of generalized Pl\"ucker coordinates.
    }
  \end{tabular}

  \vspace{10mm}
  \hrule

\end{center}
\vfill

\newpage

\setcounter{page}{1}

\tableofcontents


\section{Introduction}
Let $G(F)$ be a connected semisimple Lie group over a field $F$ with Lie algebra $\mathfrak{g}(F)$. The Iwasawa decomposition states that $G(F)$ may be written\footnote{In the literature, one additionally encounters the equivalent decomposition $G = KAN$. Which one is used is a matter of convention, and statements and proofs within this paper easily carry over.}
\begin{equation}
  G(F) = N(F) A(F) K(F)
  .
\end{equation}
where $N(F)$ is a nilpotent subgroup generated by the positive Chevalley generators in $\mathfrak{g}(F)$, $A(F)$ is an abelian subgroup generated by a maximal abelian subalgebra of $\mathfrak{g}(F)$ and $K(F)$ is a maximal compact subgroup of $G(F)$. The group elements of $N(F)$, $A(F)$ and $K(F)$ in the decomposition of some group element in $g \in G(F)$ are called the nilpotent-, semisimple- and compact parts of $g$ respectively.

This paper discusses the field of real numbers and the field of $p$-adic numbers. We recall some terminology regarding $p$-adic numbers and their norm.

\begin{defn}[$p$-adic numbers]
  Let $p$ be a prime number. The $\boldsymbol{p}$\textbf{-adic norm} of a rational number $\frac{m'}{n'} p^a$, where the primes denote that the integers $m$ and $n$ carry no factors of $p$, is defined as
  \begin{equation}
    \left| \frac{m'}{n'} p^a \right|_p = p^{-a}
    .
  \end{equation}

  The $\boldsymbol{p}$\textbf{-adic numbers} $\rats_p$ are the completion of the rational numbers with respect to the $p$-adic norm.

  The $\boldsymbol{p}$\textbf{-adic integers} $\ints_p$ are the unit ball in $\rats_p$,
  \begin{equation}
    \ints_p \equiv \left\{ x \in \rats_p : |x|_p \leq 1 \right\}
    .
  \end{equation}

  The $\boldsymbol{p}$\textbf{-adic units} $\ints_p^\times$ are the unit circle in $\rats_p$,
  \begin{equation}
    \ints_p^{\times} \equiv \left\{ x \in \rats_p : |x|_p = 1 \right\}
    .
  \end{equation}
\end{defn}
\begin{remark}
  The $p$-adic norm satisfies the ultrametric property
  \begin{equation}
    |x+y|_p \leq \max \{|x|_p, |y|_p\} \qtext{where } x \text{ and } y \in \rats_p
    .
  \end{equation}
  This property renders the $p$-adic numbers non-archimedean.
\end{remark}
\begin{remark}
  The $p$-adic integers form a ring and sit compactly inside the $p$-adic numbers.
\end{remark}

In the case $F = \reals$, the maximal compact subgroup is the exponentiation of the subalgebra of $\mathfrak{g}(F)$ consisting of the fixed point elements of the Chevalley involution.

By contrast, in the non-archimedean case $F = \rats_p$, the notion of a maximal compact subgroup is defined by virtue of the fact that $\ints_p$ forms a compact ring inside $\rats_p$. For matrix groups (which are linear algebraic subgroups of $\GL(n, \rats_p)$), the maximal compact subgroup is defined by restricting $G( \rats_p )$ to the subgroup of integers points
\begin{equation}
  K( \ints_p ) = G( \rats_p ) \cap \GL( n, \ints_p ) \equiv G(\ints_p)
  .
\end{equation}
$K( \ints_p )$ defined in this way then sits compactly inside $G(\rats_p)$.

The Iwasawa-decomposition is relevant for maximal parabolic Eisenstein-series, which are automorphic forms and constructed by averaging a character of a Lie group over a discrete subgroup. Since a character is only sensitive to the abelian part of a group, it is useful to possess a formula which rewrites a group element in Iwasawa form, or at least that can extract the semisimple part $A$ given an arbitrary group element. Furthermore when dealing with automorphic forms, it is oftentimes advantageous to operate over the ring of adeles $\adeles_\rats$ which calls for the formula in question to be applicable both for groups over real numbers and groups over $p$-adic numbers. For a review of working with Eisenstein series over the adeles, see \cite{fleig2015eisenstein}.

This paper treats $G(F) = \SL (n, F )$ in the fundamental representation for $F = \reals$ and $F = \rats_p$ separately.
In the $\SL (n, F )$-case, the group element $N$ becomes a unit upper triangular matrix and $A$ becomes a diagonal matrix of unit determinant. We have the following definition for their matrix elements:
\begin{defn}[Axions, dilatons and Cartan torus]
  For a matrix in $\SL (n, F )$ written in Iwasawa form, the matrix elements above the diagonal in $N$ are called \textbf{axions}. The matrix $A$ is called the \textbf{Cartan torus} and when parameterized as
  \begin{equation}
    \label{eqn:dilatons}
    A =
    \left(
    \begin{array}{ccccc}
      y_1 \\
      & \frac{y_2}{y_1} \\
      & & \ddots \\
      & & & \frac{y_{n-1}}{y_{n-2}} \\
      & & & & \frac{1}{y_{n-1}} \\
    \end{array}
    \right)
    ,
  \end{equation}
  the $y$'s are called \textbf{dilatons}.
\end{defn}
\begin{remark}
  It can be desirable to define $y_0 \equiv y_n \equiv 1$ and write $\frac{y_1}{y_0}$ and $\frac{y_n}{y_{n-1}}$ for the first and last elements respectively.
\end{remark}
\begin{remark}
  The vocabulary comes from String Theory, where the moduli are referred to as axions and dilatons.
\end{remark}<++>
In the $p$-adic case, the Iwasawa-decomposition is not unique but the norms of the dilatons are unique, which we prove below. It is these norms that are important for Eisenstein series and a formula to compute them given an arbitrary $p$-adic matrix is derived. This formula is the main result of this paper.

In the real case, the Iwasawa-decomposition of a given group element is unique and a formula is given for the complete decomposition. 

Throughout this paper, we use the convention that two integers separated by ellipses or a dash are to be interpreted as an integer spaced interval. For example, given two integers $a$ and $b$, writing
\begin{equation*}
  \begin{aligned}
    &a, \dots, b \qtextq{is shorthand for} a, a+1, a+2, \dots, b-2, b-1, b \qtext{and} \\
    &\epsilon_{i_a \dash i_b} \qtextq{is shorthand for} \epsilon_{i_{a} i_{a+1} i_{a+2} \dots i_{b-2} i_{b-1} i_{b}}
    .
  \end{aligned}
\end{equation*}
If $a>b$, the interval is defined to be empty. Furthermore, we denote matrix elements $M_{ij}$ of a matrix $M$ by a pair of indices, denoting the row and column respectively.

My personal motivation to pursue the results presented here comes from the study of automorphic forms that appear in toroidal compactifications of type IIB String Theory \cite{green1997effects,green2006duality,green2010eisenstein,green2015small}, where in the case of compactification on a 3-torus, one finds Eisenstein series on the group $\SL(5, \reals)$. The work \cite{fleig2015eisenstein} provides an introduction for mathematicians to this area of physics. 

\section{Decomposing $\SL( n, \rats_p )$}
We begin with a more precise statement of Iwasawa-decomposition for matrices in $\SL( n, \rats_p )$.
\begin{thm}[Iwasawa-decomposition for $\SL( n, \rats_p )$]
  A matrix $M \in \SL( n, \rats_p )$ may be written
  \begin{equation}
    M = NAK
  \end{equation}
  where $N \in \SL( n, \rats_p )$ is unit upper triangular, $A \in \SL( n, \rats_p )$ is diagonal and $K \in \SL( n, \ints_p )$.
  \label{thm:padiciwasawa}
\end{thm}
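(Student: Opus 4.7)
The plan is to proceed by induction on $n$, establishing the analogous $\GL(n, \rats_p)$ statement along the way and absorbing the determinant constraint at the end. The base case $n=1$ is trivial since $\SL(1, \rats_p) = \{1\}$.

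For the inductive step, the key is to right-multiply $M$ by an element of $\GL(n, \ints_p)$ so as to clear the off-diagonal entries of the last row. Let $v = (m_{n1}, \dots, m_{nn})$ denote that row. The ultrametric property guarantees that $\max_i |m_{ni}|_p$ is attained, and I pick $a_n \in \rats_p^\times$ with $|a_n|_p$ equal to this maximum. Then $w = a_n^{-1} v$ lies in $\ints_p^n$ and is \emph{primitive}, i.e.\ at least one coordinate is a $p$-adic unit. A standard fact about principal ideal domains is that every primitive vector in $\ints_p^n$ extends to a basis of $\ints_p^n$; invoking this I produce $K_0 \in \GL(n, \ints_p)$ whose last row is $w$. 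Since then $w K_0^{-1} = e_n$, the matrix $MK_0^{-1}$ has block form
\begin{equation*}
  MK_0^{-1} = \begin{pmatrix} M' & u \\ 0 & a_n \end{pmatrix}
\end{equation*}
with $M' \in \GL(n-1, \rats_p)$ and $u \in \rats_p^{n-1}$.

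Applying the inductive hypothesis to write $M' = N'A'K'$, a direct block-matrix calculation gives
\begin{equation*}
  MK_0^{-1} = \begin{pmatrix} N' & u/a_n \\ 0 & 1 \end{pmatrix} \begin{pmatrix} A' & 0 \\ 0 & a_n \end{pmatrix} \begin{pmatrix} K' & 0 \\ 0 & 1 \end{pmatrix},
\end{equation*}
which yields an Iwasawa decomposition $M = NA\tilde{K}$ with $N$ unit upper triangular, $A$ diagonal, and $\tilde{K} \in \GL(n, \ints_p)$. To convert this to the $\SL$ statement, observe that $\tilde K \in \GL(n, \ints_p)$ forces $|\det \tilde K|_p = 1$, hence $\det A \in \ints_p^\times$. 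I then absorb this unit into the $(n,n)$-entry of $A$ and compensate by scaling the last row of $\tilde K$; this replaces $A$ by an element of determinant $1$ and $\tilde K$ by an element of $\SL(n, \ints_p)$ without altering the product.

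The main obstacle is the lemma that every primitive vector in $\ints_p^n$ extends to a $\ints_p$-basis (equivalently, to a row of some element of $\GL(n, \ints_p)$). This is a standard consequence of $\ints_p$ being a PID, but it is the one non-formal ingredient; everything else is induction and bookkeeping, and indeed this construction is precisely the algorithm the paper advertises.
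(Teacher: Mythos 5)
Your proof is correct, and the inductive skeleton --- clear the bottom row of $M$ by a right factor in $\GL(n,\ints_p)$, recurse on the remaining $(n-1)\times(n-1)$ block, and absorb the residual unit determinant into $A$ at the end --- is the same one the paper uses. The difference lies in how the compact factor clearing the bottom row is produced. The paper (which in fact never isolates a standalone proof of this theorem; the argument is embedded in the strong UL-decomposition and the remark following Theorem~\ref{thm:piwasawa}) permutes the column containing the maximal-norm entry of the bottom row to the rightmost position via $\Pi_a$, performs the explicit minor-based UL-decomposition $M = V\Delta\Lambda\Pi_a^{-1}$, and observes that the bottom row of $\Lambda$ is integral because every entry is a bottom-row entry of $M$ divided by the maximal one; the factor $R\Pi_a^{-1}$ it peels off is then exactly a unimodular completion of the normalized bottom row. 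You instead invoke the abstract fact that a primitive vector in $\ints_p^n$ extends to a row of some $K_0\in\GL(n,\ints_p)$ --- which over the local ring $\ints_p$ admits a one-line construction: if $w_j\in\ints_p^\times$, the matrix with rows $e_1,\dots,\widehat{e_j},\dots,e_n,w$ has determinant $\pm w_j$ and so lies in $\GL(n,\ints_p)$. Your route buys a shorter, cleaner existence proof that avoids all minor identities; the paper's route buys more, since the same machinery simultaneously yields the closed formulae for the norms of the dilatons, which is the actual goal of that section. One cosmetic point: since your induction runs through the $\GL$ statement, the base case you should cite is $\GL(1,\rats_p)$ (where $M=(m)$ decomposes as $N=K=1$, $A=(m)$) rather than $\SL(1,\rats_p)=\{1\}$; it is of course equally trivial.
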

\begin{remark}
  By counting degrees of freedom, we find $n^2-1$ on the left hand side and $\frac{n(n-1)}{2}+n-1+n^2-1$ on the right hand side. The decomposition is hence not unique and we expect a $\left( \frac{n(n+1)}{2}-1 \right)$-family of decompositions.
\end{remark}
\subsection{Preliminaries}
We proceed by giving some definitions that will be relevant in the pursuit of a formula for $p$-adic Iwasawa-decomposition.
\begin{defn}[Minor]
  Given an $m\times n$ matrix $M$, a \textbf{minor of order} $\bm{k}$
  \begin{equation}
    M\left(
    \begin{smallmatrix}
      r_{1} & \dots & r_{k} \\
      c_{1} & \dots & c_{k} \\
    \end{smallmatrix}
    \right)
    \label{eqn:minor}
  \end{equation}
  is the determinant of the submatrix of $M$ obtained by only picking the $k$ rows $\{r_{i}\}$ and $k$ columns $\{c_{i}\}$.

  If the rows and columns agree, i.e.\ $r_i = c_i$ for all $i\in \{1, \dots, k\}$, then the minor is called a \textbf{principal minor}.

  If the rows selected are the first $k$ rows in order,
  \begin{equation}
    M\left(
    \begin{smallmatrix}
      1 & \dots & k \\
      c_{1} & \dots & c_{k} \\
    \end{smallmatrix}
    \right)
    ,
  \end{equation}
  the minor is called a \textbf{leading minor} while if they are the last $k$ rows in order,
  \begin{equation}
    M\left(
    \begin{smallmatrix}
      m-k+1 & \dots & m \\
      c_{1} & \dots & c_{k} \\
    \end{smallmatrix}
    \right)
    ,
  \end{equation}
  we will call it an \textbf{anti-leading minor}.

  Hence, the minors
  \begin{equation}
    M\left(
    \begin{smallmatrix}
      1 & \dots & k \\
      1 & \dots & k \\
    \end{smallmatrix}
    \right)
    \quad{}\text{and}\quad{}
    M\left(
    \begin{smallmatrix}
      m-k+1 & \dots & m \\
      n-k+1 & \dots & n \\
    \end{smallmatrix}
    \right)
  \end{equation}
  are called the \textbf{leading principal minor} and the \textbf{anti-leading principal minor} of order $k$ respectively.

  The empty minor is defined as
  \begin{equation}
    M\left(
    \begin{smallmatrix}
      {} \\
      {} \\
    \end{smallmatrix}
    \right)
    \equiv 1.
  \end{equation}
\end{defn}
\begin{remark}
  A minor is totally antisymmetric under permutations of the rows as well as the columns. Hence it vanishes if some $r$'s coincide or some $c$'s coincide.
\end{remark}
\begin{remark}
  A minor can be expanded along a row or column according the formula
  \begin{equation}
    \begin{aligned}
      M\left(
      \begin{smallmatrix}
        r_{1} & \dots & r_{k} \\
        c_{1} & \dots & c_{k} \\
      \end{smallmatrix}
      \right)
      ={}&
      \sum_{a=1}^{k}
      (-1)^{a+1}
      M\left(
      \begin{smallmatrix}
        r_{1} \\
        c_{a} \\
      \end{smallmatrix}
      \right)
      M\left(
      \begin{smallmatrix}
        r_{2} & \dots & r_{a} & r_{a+1} & \dots & r_k \\
        c_{1} & \dots & c_{a-1} & c_{a+1} & \dots & c_k \\
      \end{smallmatrix}
      \right)
      = \\
      ={}&
      \sum_{a=1}^{k}
      (-1)^{a+1}
      M\left(
      \begin{smallmatrix}
        r_{a} \\
        c_{1} \\
      \end{smallmatrix}
      \right)
      M\left(
      \begin{smallmatrix}
        r_{1} & \dots & r_{a-1} & r_{a+1} & \dots & r_k \\
        c_{2} & \dots & c_{a} & c_{a+1} & \dots & c_k \\
      \end{smallmatrix}
      \right)
    \end{aligned}
  \end{equation}
  called Laplace expansion.
\end{remark}

Next we recall the LU-decomposition of a matrix and an accompanying lemma.
\begin{lemma}
  \label{lemma:plm}
  Given a non-singular square matrix $M$ of size $n$, there is a permutation matrix $P$ such that the leading principal minors of $MP$ are all non-zero.
\end{lemma}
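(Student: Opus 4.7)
The plan is to build the required column ordering greedily, by induction on the order of the minor. Right-multiplication by a permutation matrix $P$ simply reorders the columns of $M$: if $(c_1, \dots, c_n)$ denotes the permutation of $(1, \dots, n)$ such that the $i$-th column of $MP$ is the $c_i$-th column of $M$, then the leading principal minor of order $k$ of $MP$ equals $M\left( \begin{smallmatrix} 1 & \dots & k \\ c_1 & \dots & c_k \end{smallmatrix} \right)$. So the lemma reduces to producing an ordering $c_1, \dots, c_n$ of $\{1, \dots, n\}$ such that this minor is non-zero for every $k = 1, \dots, n$.

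The base case $k=1$ is immediate: non-singularity of $M$ forces its first row to be non-zero, so some entry $M_{1, c_1}$ is non-zero.

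For the inductive step, suppose $c_1, \dots, c_k$ are distinct and the order-$k$ minor is non-zero. Then columns $c_1, \dots, c_k$ of $M$, restricted to rows $1, \dots, k+1$, are linearly independent in $F^{k+1}$ --- their top $k$ entries already are. Meanwhile, the $(k+1) \times n$ submatrix formed by the first $k+1$ rows of $M$ has rank $k+1$, since any $k+1$ rows of the non-singular matrix $M$ are independent; in particular its columns span $F^{k+1}$. Hence there is a column $c_{k+1}$ of $M$, necessarily distinct from $c_1, \dots, c_k$, whose first $k+1$ entries lie outside the span of those of $c_1, \dots, c_k$, and this delivers the desired non-zero order-$(k+1)$ minor. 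Iterating to $k = n$ produces the full permutation, and the associated $P$ satisfies the claim.

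I do not foresee a serious obstacle: the argument is a basis-extension dressed up in the language of minors. The only small subtlety is that the non-vanishing of the order-$k$ minor gives slightly more than meets the eye --- it forces the $(k+1) \times k$ submatrix obtained from rows $1, \dots, k+1$ and columns $c_1, \dots, c_k$ to already have rank $k$ --- but this is automatic, because any linear relation among these $(k+1)$-dimensional columns would restrict to a linear relation among their top $k$ entries, contradicting the inductive hypothesis. Likewise, distinctness of $c_{k+1}$ from the previously chosen columns is free, since any already-chosen column trivially lies in the span of $c_1, \dots, c_k$.
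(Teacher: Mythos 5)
Your proof is correct. The greedy step is sound: non-vanishing of the order-$k$ minor makes the columns $c_1,\dots,c_k$ restricted to rows $1,\dots,k+1$ linearly independent, the first $k+1$ rows of a non-singular matrix form a rank-$(k+1)$ submatrix whose columns span $F^{k+1}$, and so some column $c_{k+1}$ escapes the span and yields a non-zero order-$(k+1)$ minor (up to the harmless sign from reordering columns). The route is genuinely different from the paper's, though. The paper does not prove this lemma at all---it cites Horn and Johnson---and instead proves the stronger anti-leading variant (its Lemma 2, where the rightmost column of $M\Pi_a$ is prescribed) by induction on the matrix size $n$: delete the top row, select $n-1$ linearly independent columns among the remaining rows with the distinguished column among them, and recurse on the bottom-right $(n-1)\times(n-1)$ block. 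Your bottom-up basis-extension is more self-contained and arguably cleaner for the plain statement; the paper's top-down induction is structured so that it can pin a chosen column at the end, which is what the strong UL-decomposition needs. If you wanted your argument to subsume the paper's Lemma 2, you would have to run the greedy selection in the anti-leading direction and arrange to save the distinguished column for the final step, which requires an extra argument that it is never forced into the independent set prematurely.
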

A proof of this can be found in \cite{horn2012matrix} and below we prove a more powerful version of this lemma adapted for anti-leading minors.

Given a non-singular square matrix $M$, one can then always find a permutation matrix $P$ such that LU-decomposition of $MP$ is possible.
\begin{thm}[LU-decomposition]
  \label{thm:lu}
Let $F$ be a field. A matrix $M \in \SL (n, F )$ can be written as
  \begin{equation}
    \begin{aligned}
      M ={}& MP P^{-1} = LDUP^{-1} = \\
      ={}&
      \left(
      \begin{array}{cccc}
        1 & & & \\
        l_{21} & 1 & & \\
        \vdots & \vdots & \ddots & \\
        l_{n1} & l_{n2} & \dots & 1
      \end{array}
      \right)
      \left(
      \begin{array}{cccc}
        y_1 &         &        & \\
        & \frac{y_2}{y_1} &        & \\
        &         & \ddots & \\
        &         &        & \frac{1}{y_{n-1}} \\
      \end{array}
      \right)
      \left(
      \begin{array}{cccc}
        1 & u_{12} & \dots & u_{n1} \\
        & 1      & \dots & u_{2n} \\
        &        & \ddots& \vdots \\
        &        &       & 1 \\
      \end{array}
      \right)
      P^{-1}
    \end{aligned}
  \end{equation}
  where $D$ has unit determinant and its elements are given by
  \begin{equation}
    y_{p} = 
    (MP)\left(
    \begin{smallmatrix}
      1 & \dots & p \\
      1 & \dots & p \\
    \end{smallmatrix}
    \right)
    ,
  \end{equation}
  $L$ is unit lower triangular with matrix elements
  \begin{equation}
    l_{ip} = 
    \left.
    (MP)\left(
    \begin{smallmatrix}
      1 & \dots & p-1 & i \\
      1 & \dots & p-1 & p \\
    \end{smallmatrix}
    \right)
    \right/
    (MP)\left(
    \begin{smallmatrix}
      1 & \dots & p \\
      1 & \dots & p \\
    \end{smallmatrix}
    \right)
    =
    \left.
    (MP)\left(
    \begin{smallmatrix}
      1 & \dots & p-1 & i \\
      1 & \dots & p-1 & p \\
    \end{smallmatrix}
    \right)
    \right/
    y_p
    \quad{}\text{$i\geq p$,}
  \end{equation}
  $U$ is unit upper triangular with matrix elements
  \begin{equation}
    u_{pi} = 
    \left.
    (MP)\left(
    \begin{smallmatrix}
      1 & \dots & p-1 & p \\
      1 & \dots & p-1 & i \\
    \end{smallmatrix}
    \right)
    \right/
    (MP)\left(
    \begin{smallmatrix}
      1 & \dots & p \\
      1 & \dots & p \\
    \end{smallmatrix}
    \right)
    =
    \left.
    (MP)\left(
    \begin{smallmatrix}
      1 & \dots & p-1 & p \\
      1 & \dots & p-1 & i \\
    \end{smallmatrix}
    \right)
    \right/
    y_p
    \quad{}\text{$p\leq i$}
  \end{equation}
  and $P$ is almost\footnote{\label{footnote:minussigns}In the case that $P$ is an odd permutation, we may replace it with for example
    $
    \left(
    \begin{smallmatrix}
      -1 \\
       & 1 \\
       & & \ddots \\
       & & & 1
    \end{smallmatrix}
    \right)
    P
    $
    to give it a positive determinant and thereby preserve the determinant condition on both sides.}a permutation matrix.
\end{thm}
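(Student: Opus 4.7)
The plan is to first apply Lemma \ref{lemma:plm} to secure a permutation matrix $P$ such that every leading principal minor of $M' \equiv MP$ is non-zero, adjusting $P$ by left-multiplication with $\operatorname{diag}(-1, 1, \dots, 1)$ if $P$ is an odd permutation so that $\det P = 1$; a direct check shows this adjustment merely flips the signs of some leading principal minors, so non-vanishing is preserved. The theorem then reduces to establishing existence, uniqueness, and the closed-form entries of an $LDU$ factorization for a matrix $M' \in \SL(n, F)$ whose leading principal minors are all non-zero.

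For existence I would proceed by induction on $n$. Partitioning $M' = \begin{pmatrix} a & v^T \\ w & M'' \end{pmatrix}$ with $a = M'_{11} \neq 0$, I would set $y_1 = a$, let the first column of $L$ be $(1, w/a)^T$ and the first row of $U$ be $(1, v^T/a)$, and apply the inductive hypothesis to the Schur complement $M'' - w v^T / a$. The standard identity that the order-$k$ leading principal minor of $M'$ equals $a$ times the order-$(k-1)$ leading principal minor of the Schur complement guarantees that the inductive hypothesis applies. Uniqueness follows from the usual argument: if $LDU = L' D' U'$, then $(L')^{-1} L = D' U' U^{-1} D^{-1}$ is simultaneously unit lower triangular and upper triangular, hence equal to the identity, which forces $L = L'$ and subsequently $D = D'$, $U = U'$.

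For the closed-form entries, I would apply the Cauchy--Binet formula to $M' = L(DU)$. Because $DU$ is upper triangular, the minor $(DU)\left(\begin{smallmatrix} s_1 & \dots & s_p \\ 1 & \dots & p \end{smallmatrix}\right)$ vanishes unless $\{s_1, \dots, s_p\} = \{1, \dots, p\}$, in which case it equals $\prod_{i=1}^{p}(DU)_{ii} = y_p$. Selecting rows $\{1, \dots, p\}$ of $M'$ then yields the formula for $y_p$ directly; selecting the bordered rows $\{1, \dots, p-1, i\}$ produces a lower triangular $L$-submatrix with unit diagonal except for an $l_{ip}$ in the bottom-right corner, whose determinant is exactly $l_{ip}$, giving the bordered-minor formula for $l_{ip}$. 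The formula for $u_{pi}$ follows by the transposed argument applied to $M' = (LD) U$. I expect the main obstacle to be the Cauchy--Binet bookkeeping, and in particular verifying cleanly that the upper triangularity of $DU$ collapses the $p$-subset sum to the single term $S = \{1, \dots, p\}$; once that is seen, the rest is a direct computation.
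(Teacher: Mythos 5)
Your proposal is correct. Note that the paper does not actually prove this theorem --- it simply cites Householder for the minor formulae --- so you are supplying an argument the paper outsources. Your outline is sound at every step: Lemma \ref{lemma:plm} plus the sign adjustment (which indeed only flips signs of minors), Schur-complement induction for existence, the standard triangularity argument for uniqueness, and Cauchy--Binet for the closed forms. The one step you flag as a potential obstacle is in fact routine: for an upper triangular matrix the minor with rows $s_1<\dots<s_p$ and columns $c_1<\dots<c_p$ vanishes unless $s_j\le c_j$ for every $j$, and with $c_j=j$ this forces $s_j=j$ since the $s_j$ are distinct positive integers in increasing order; the telescoping product of the diagonal of $DU$ then gives $y_p$ as claimed.
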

A proof of the formulae above can be found in \cite{householder2013theory}.

\subsection{New results}
For the purposes of Iwasawa-decomposition, we will make use of a UL-decomposition rather than the conventional LU-decomposition. In this case, leading principal minors get replaced by anti-leading principal minors.\footnote{Switching between UL and LU introduces no complications for proofs, which work analogously for the two. Which one is needed depends on which of the conventions $KAN$ or $NAK$ is used.} We begin by proving a more powerful version of lemma \ref{lemma:plm} which lets us construct a UL-decomposition with the added property that one may choose the rightmost column in the matrix to be decomposed freely, as long as its bottom element is nonzero. This stronger UL-decomposition will turn out to be useful in computing the Iwasawa-decomposition of an arbitrary matrix in $\SL( n, \rats_p )$.
\begin{lemma}
  Given a non-singular square matrix $M$ of size $n$, there is a permutation matrix $\Pi_a$ such that the anti-leading principal minors of $M\Pi_a$ are all non-zero where $\Pi_a$ moves column $a$ to the rightmost position and we require the bottom element of column $a$ to be non-zero.
\end{lemma}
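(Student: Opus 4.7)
The plan is to construct the desired permutation of the remaining $n-1$ columns one column at a time, placing columns into positions $n-1, n-2, \dots, 1$ from right to left, while maintaining the invariant that after $k$ columns have been placed (at positions $n-k+1, \dots, n$, with column $a$ pinned at position $n$), the resulting order-$k$ anti-leading principal minor is non-zero. The base case $k=1$ is immediate: placing column $a$ in position $n$ produces the $1\times 1$ anti-leading principal minor $M_{na}$, which is non-zero by the hypothesis on $a$.

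For the inductive step, assume columns are placed at positions $n-k+1, \dots, n$ so that the order-$k$ anti-leading principal minor is non-zero. Let $C_{n-k+1}, \dots, C_n \in F^{k+1}$ denote the restrictions of those columns to rows $n-k, \dots, n$, and consider the linear functional $\phi \colon F^{k+1} \to F$ defined by $\phi(v) = \det[v \mid C_{n-k+1} \mid \cdots \mid C_n]$. Laplace expansion along the first column gives
\[
\phi(v) = \sum_{j=0}^{k} (-1)^{j}\, v_{n-k+j}\, m_j,
\]
where $m_0$ is precisely the order-$k$ anti-leading principal minor of the currently constructed block and hence is non-zero by induction. The goal is then to exhibit an unused column of $M$ whose restriction to rows $n-k, \dots, n$ has non-zero image under $\phi$, so that it may be placed in position $n-k$ to preserve the invariant at step $k+1$.

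Since $\phi$ vanishes automatically on each of $C_{n-k+1}, \dots, C_n$ (repeated columns), if it additionally vanished on the restriction of every unused column, it would vanish on the restrictions of all $n$ columns of $M$. Reading off the coefficient tuple $((-1)^{j} m_j)_{j=0}^{k}$, this would exhibit a non-trivial linear dependence among rows $n-k, \dots, n$ of $M$ (non-trivial because $m_0 \neq 0$), forcing $M$ to be singular and yielding a contradiction. Hence the required column always exists, and iterating down to $k=n$ produces a permutation $\Pi_a$ with every anti-leading principal minor non-zero.

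The only real obstacle is bookkeeping in the Laplace expansion\dash specifically, verifying the sign conventions and confirming that the coefficient $m_0$ coincides with the order-$k$ anti-leading principal minor of the block built at the previous step, so that the inductive hypothesis can be invoked at the right place. Once that identification is pinned down, the argument reduces to the standard pairing of a non-trivial linear functional on columns against the non-singularity (equivalently, full row rank) of $M$.
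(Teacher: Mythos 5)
Your proof is correct, and it takes a genuinely different route from the paper's. The paper argues by induction on the matrix size $n$: it permutes column $a$ away from the leftmost position, deletes the top row, uses the fact that the resulting $(n-1)\times n$ matrix has full row rank to select $n-1$ linearly independent columns that include column $a$ (possible since the restriction of column $a$ is non-zero), moves those to the right, and invokes the lemma for the non-singular bottom-right $(n-1)\times(n-1)$ block; the final minor is $\det(M\Pi)\neq 0$. You instead build the permutation greedily from the right, one column per step, and certify the existence of each next column by pairing the linear functional $\phi(v)=\det[v\mid C_{n-k+1}\mid\cdots\mid C_n]$ (non-trivial because its coefficient $m_0$ is the previously secured order-$k$ anti-leading principal minor) against the full row rank of $M$: if $\phi$ killed every column restriction, the coefficients $(-1)^j m_j$ would give a non-trivial dependence among rows $n-k,\dots,n$. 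Your version is more constructive --- it is essentially an algorithm for producing $\Pi_a$, and it makes explicit where non-singularity enters at each order $k$ --- whereas the paper's version is shorter on the page but buries the analogous work inside the basis-exchange step ("$n-1$ independent columns containing column $a$") and inside the recursive appeal to the lemma itself. Both ultimately rest on the same fact (row rank equals column rank), so neither is more general; your bookkeeping of the Laplace signs is consistent and the identification of $m_0$ with the order-$k$ anti-leading principal minor is exactly right.
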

\begin{proof}
  The permutation of columns is realized by multiplication of a permutation matrix from the right. The proof works by induction and the cases $n=1$ and $n=2$ are obvious. Assume that the statement holds true for matrices of size up to and including $n-1$ and consider a matrix of size $n$. Start by permuting the columns of $M$ so that column $a$ is not in the leftmost position and consider the $(n-1)\times n$ matrix obtained by deleting the top row of the permuted matrix. This matrix contains $n-1$ linearly independent rows and hence also contains $n-1$ linearly independent columns where column $a$ can be assumed to be one of them, since it is not a zero-column. Permute these columns to the rightmost $n-1$ positions and apply the induction hypothesis to the bottom right $(n-1)\times(n-1)$ block of the permuted matrix, which is allowed since this block is guaranteed to be a non-singular matrix. This establishes that the first $n-1$ anti-leading principal minors of $M\Pi$ are non-zero and the rightmost column of $M \Pi$ is column $a$ of $M$, where $\Pi$ denotes the resulting permutation matrix. The last anti-leading principal minor is simply $\det(M \Pi)$ which is non-zero since $M \Pi$ is non-singular. Peano's axiom of induction now establishes the lemma.
\end{proof}

\begin{thm}[Strong UL-decomposition]
Let $F$ be a field. A matrix $M \in \SL (n, F )$ with $M_{na} \neq 0$ can be written as
  \begin{equation}
    \begin{aligned}
      M ={}& M \Pi_a \Pi_a^{-1} = V \Delta \Lambda \Pi_a^{-1} = \\
      ={}&
      \left(
      \begin{array}{cccc}
        1 & v_{12} & \dots & v_{n1} \\
        & 1      & \dots & v_{2n} \\
        &        & \ddots& \vdots \\
        &        &       & 1 \\
      \end{array}
      \right)
      \left(
      \begin{array}{cccc}
        \eta_1 &         &        & \\
        & \frac{\eta_2}{\eta_1} &        & \\
        &         & \ddots & \\
        &         &        & \frac{1}{\eta_{n-1}} \\
      \end{array}
      \right)
      \left(
      \begin{array}{cccc}
        1 & & & \\
        \lambda_{21} & 1 & & \\
        \vdots & \vdots & \ddots & \\
        \lambda_{n1} & \lambda_{n2} & \dots & 1
      \end{array}
      \right)
      \Pi_a^{-1}
    \end{aligned}
  \end{equation}
  where $\Delta$ has unit determinant and its elements are given by
  \begin{equation}
    \label{eqn:etas}
    \eta_{p} = 
    \left(
    \left(M \Pi_a\right)\left(
    \begin{smallmatrix}
      p+1 & \dots & n \\
      p+1 & \dots & n \\
    \end{smallmatrix}
    \right)
    \right)^{-1}
    ,
  \end{equation}
  $V$ is unit upper triangular with matrix elements
  \begin{equation}
    \label{eqn:vs}
    v_{pi} = 
    \left.
    \left(M \Pi_a\right)\left(
    \begin{smallmatrix}
      p & i+1 & \dots & n \\
      i & i+1 & \dots & n \\
    \end{smallmatrix}
    \right)
    \right/
    \left(M \Pi_a\right)\left(
    \begin{smallmatrix}
      i & \dots & n \\
      i & \dots & n \\
    \end{smallmatrix}
    \right)
    =
    \left(M \Pi_a\right)\left(
    \begin{smallmatrix}
      p & i+1 & \dots & n \\
      i & i+1 & \dots & n \\
    \end{smallmatrix}
    \right)
    \eta_{i-1}
    \quad{}\text{$p\leq i$,}
  \end{equation}
  $\Lambda$ is unit lower triangular with matrix elements
  \begin{equation}
    \label{eqn:lambdas}
    \lambda_{ip} = 
    \left.
    \left(M \Pi_a\right)\left(
    \begin{smallmatrix}
      i & i+1 & \dots & n \\
      p & i+1 & \dots & n \\
    \end{smallmatrix}
    \right)
    \right/
    \left(M \Pi_a\right)\left(
    \begin{smallmatrix}
      i & \dots & n \\
      i & \dots & n \\
    \end{smallmatrix}
    \right)
    =
    \left(M \Pi_a\right)\left(
    \begin{smallmatrix}
      i & i+1 & \dots & n \\
      p & i+1 & \dots & n \\
    \end{smallmatrix}
    \right)
    \eta_{i-1}
    \quad{}\text{$i\geq p$}
  \end{equation}
  and $\Pi_a$ is a permutation matrix which moves column $a$ to the rightmost position, subject to the caveat explained in footnote \ref{footnote:minussigns}.
\end{thm}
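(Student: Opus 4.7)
The plan is to reduce the claim to the LU-decomposition of Theorem \ref{thm:lu} by using the preceding lemma to fix the column permutation and by conjugating with the anti-diagonal reversal matrix to convert UL into LU.

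First, apply the lemma to obtain a permutation matrix $\Pi_a$ that moves column $a$ of $M$ to the rightmost position and renders every anti-leading principal minor of $M' := M\Pi_a$ non-zero. Let $J$ denote the $n \times n$ permutation matrix with $1$'s on the anti-diagonal, so that $(JXJ)_{ij} = X_{n+1-i,\, n+1-j}$ for any $n \times n$ matrix $X$. A direct computation shows that the leading principal $k \times k$ block of $J M' J$ equals $J_k B J_k$, where $J_k$ is the $k \times k$ reversal matrix and $B$ is the bottom-right $k \times k$ block of $M'$; since $(\det J_k)^2 = 1$, the leading principal minor of $J M' J$ of order $k$ equals the anti-leading principal minor of $M'$ of order $k$, so all of the former are non-zero by construction.

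Hence Theorem \ref{thm:lu} applies directly to $J M' J$ with trivial column permutation $P = I$, yielding a factorisation $J M' J = L D U$ with $L$ unit lower triangular, $D$ diagonal of unit determinant, and $U$ unit upper triangular. Conjugating back,
\begin{equation*}
  M \Pi_a = M' = J(LDU)J = (JLJ)(JDJ)(JUJ) =: V \Delta \Lambda,
\end{equation*}
and by inspection $V = JLJ$ is unit upper triangular, $\Lambda = JUJ$ is unit lower triangular, and $\Delta = JDJ$ is diagonal with its entries in reversed order. This establishes the existence of the factorisation and the diagonal nature of $\Delta$.

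Finally, translate the explicit formulae of Theorem \ref{thm:lu} into formulae \eqref{eqn:etas}, \eqref{eqn:vs} and \eqref{eqn:lambdas} by substituting the index reversal $i \mapsto n+1-i$, $j \mapsto n+1-j$ in every minor of $J M' J$. The total antisymmetry of minors under permutation of rows and columns absorbs the reversal into signs that cancel pairwise, leaving precisely the anti-leading minors of $M \Pi_a$ advertised in the statement. The $p=1$ case of \eqref{eqn:etas} together with $\det M' = 1$ ensures that $\Delta$ has unit determinant. The parity caveat for $\Pi_a$ is handled identically to footnote \ref{footnote:minussigns}. The only real obstacle is the bookkeeping of which anti-leading minor of $M'$ corresponds to which leading minor of $J M' J$, which is routine but must be carried out carefully to confirm that the division by $\eta_{i-1}$ in \eqref{eqn:vs} and \eqref{eqn:lambdas} matches the corresponding division by a leading principal minor in Theorem \ref{thm:lu}.
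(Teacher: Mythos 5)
Your proposal is correct and follows essentially the same route as the paper: conjugate $M\Pi_a$ by the anti-diagonal reversal matrix (your $J$, the paper's $W$), apply the standard LU-decomposition with trivial permutation (justified by the lemma on anti-leading principal minors), and conjugate back to obtain $V\Delta\Lambda$. Your explicit check that the leading principal minors of $JM'J$ equal the anti-leading principal minors of $M'$ is a detail the paper leaves implicit, but the argument is the same.
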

\begin{proof}
  Denote
  \begin{align}
    W = 
    \left(
    \begin{array}{ccc}
      & & 1 \\
      & \iddots & \\
      1 & & \\
    \end{array}
    \right)
    =
    W^{-1}
    .
  \end{align}
  Note that conjugating a matrix by $W$ ``rotates'' it by $180^\circ$. Write the matrix $WM \Pi_a W^{-1}$ using the LU-decomposition (here we get $P=\mathbbm{1}$ thanks to the action of $\Pi_a$)
  \begin{equation}
    WM \Pi_a W^{-1} = LDU
    .
  \end{equation}
  Solve for $M$ and write it as
  \begin{equation}
    M = \underbrace{W^{-1}LW}_V \underbrace{W^{-1}DW}_\Delta \underbrace{W^{-1}UW}_\Lambda \Pi_a^{-1}
    .
  \end{equation}
  The formula for the matrix elements of $V$, $\Delta$ and $\Lambda$ follow from theorem \ref{thm:lu}. Alternatively, the formulae may be proven from first principles using the same technique as for the UL-decomposition in \cite{householder2013theory}.
\end{proof}
\begin{remark}
  All minors in the formulae above contain column $a$ of the original matrix $M$.
\end{remark}

Next, we show how the family of Iwasawa-decompositions of a given matrix may be enumerated given any Iwasawa-decomposition of the matrix. A corollary of this is that the norms of the dilatons are unique across all Iwasawa-decompositions.
\begin{thm}
  \label{thm:nonuniqueness}
  Given an Iwasawa decomposition $NAK$ of $M$, all other Iwasawa-decompositions $N'A'K'$ can be found by writing $N' = NAXA^{-1}$, $A' = AY$ and $K' = (XY)^{-1}K$ and letting $X$ range over all unit upper triangular matrices in $\SL( n, \ints_p )$ and $Y$ over all diagonal matrices in $\SL( n, \ints_p^\times )$.
\end{thm}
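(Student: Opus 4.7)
The plan is to prove two inclusions: first, that every triple $(N', A', K')$ built as prescribed really is an Iwasawa decomposition of $M$; second, that every Iwasawa decomposition of $M$ arises from some admissible choice of $X$ and $Y$.

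The forward direction is essentially a cancellation. Multiplying out,
\[
    N'A'K' = (NAXA^{-1})(AY)\bigl((XY)^{-1}K\bigr) = N A X Y (XY)^{-1} K = NAK = M,
\]
so the product is correct. The membership checks are the real content. That $N'$ is unit upper triangular follows because $AXA^{-1}$ is the conjugate of a unit upper triangular matrix by a diagonal one, which only rescales the strictly upper entries while leaving the ones on the diagonal fixed; that $A'$ is diagonal is immediate. For $K' \in \SL(n, \ints_p)$ I would note that $X^{-1}$ is again unit upper triangular in $\SL(n, \ints_p)$ (its entries are computed from the entries of $X$ by the back-substitution formulas, which use only ring operations in $\ints_p$) and that $Y^{-1} \in \SL(n, \ints_p^\times)$ since $\ints_p^\times$ is closed under inversion; hence $(XY)^{-1}K \in \SL(n, \ints_p)$.

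For the converse, start from $N'A'K' = NAK$ and isolate the compact ratio:
\[
    Q := K(K')^{-1} = A^{-1} N^{-1} N' A'.
\]
The right-hand side is upper triangular ($A^{-1}$, then the unit upper triangular $N^{-1}N'$, then $A'$ all preserve the upper-triangular pattern), while the left-hand side lies in $\SL(n, \ints_p)$. Any upper triangular matrix in $\SL(n, \ints_p)$ has diagonal entries in $\ints_p^\times$, because its inverse is again upper triangular in $\SL(n, \ints_p)$ and the diagonal entries of the inverse are the reciprocals of the original. Rearranging to $N^{-1}N' = A Q (A')^{-1}$ and reading off diagonals, the unit upper triangular left-hand side forces $\mathrm{diag}(Q) = A^{-1} A'$. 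Defining $Y := A^{-1}A'$ (which is therefore diagonal with $\ints_p^\times$ entries, and of determinant $1$) and $X := Q Y^{-1}$, one checks that $X$ is unit upper triangular in $\SL(n, \ints_p)$ and that $Q = XY$. Substituting back gives $K' = Q^{-1} K = (XY)^{-1} K$, and the identity $N^{-1}N' = A Q (A')^{-1} = A X A^{-1}$ recovers $N' = NAXA^{-1}$.

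The only conceptual subtlety, and the place that needs the most care, is the integrality bookkeeping: the rings $\ints_p$ and $\ints_p^\times$ must be handled separately, and the observation that an upper triangular matrix in $\SL(n, \ints_p)$ automatically has unit diagonal entries is what allows the factorization $Q = XY$ to land with $X$ and $Y$ in the asserted classes. Beyond that, the argument is a direct manipulation of the defining equation, and the uniqueness of the factorization $Q = XY$ (diagonal times unit upper triangular) shows that each Iwasawa decomposition corresponds to exactly one pair $(X, Y)$, consistent with the degree-of-freedom count noted after Theorem~\ref{thm:padiciwasawa}.
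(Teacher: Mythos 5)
Your proof is correct and follows essentially the same route as the paper: both isolate the transition matrix $Q = K(K')^{-1} = (NA)^{-1}N'A'$, observe that it is upper triangular and lies in $\SL(n,\ints_p)$ with diagonal entries in $\ints_p^\times$, and factor it as $XY$ to read off $N'$, $A'$, $K'$. The only cosmetic difference is that you deduce the unit diagonal from the integrality of $Q^{-1}$, whereas the paper argues from $\left|\det Q\right|_p = 1$ via the ultrametric inequality; both are valid.
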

\begin{proof}
  Given two Iwasawa-decompositions of $M$
  \begin{equation}
    M = N_1 A_1 K_1 = N_2 A_2 K_2
    ,
  \end{equation}
  define the matrix
  \begin{equation}
    Z = \left( N_1 A_1 \right)^{-1} N_2 A_2 = K_1 K_2^{-1}
    .
  \end{equation}
  Note that $Z \in \SL( n, \ints_p )$ is upper triangular and $\left| \det Z \right|_p = |1|_p = 1$ is the product of its diagonal elements. This implies that the diagonal elements must all be $p$-adic units, since if some had a norm less than one, others must have a norm greater than one which would violate $Z \in \SL( n, \ints_p )$. We can therefore decompose $Z = XY$ with $X$ and $Y$ as above. The matrix $Z$ takes the first decomposition into the second by writing
  \begin{equation}
    N_1 A_1 K_1 = N_1 A_1 Z Z^{-1} K_1 = N_1 A_1 \left( N_1 A_1 \right)^{-1} N_2 A_2 K_2 K_1^{-1} K_1 = N_2 A_2 K_2
    .
  \end{equation}
  We have at the same time
  \begin{equation}
    N_1 A_1 K_1  = N_1  A_1  XY (XY)^{-1} K_1  = \underbrace{N_1  A_1  X A_1 ^{-1}}_{N_2} \underbrace{A_1  Y}_{A_2} \underbrace{(XY)^{-1} K_1}_{K_2}
    .
  \end{equation}
  From the form of $X$ and $Y$, it is clear that $N_2$ and $Y_2$ have the appropriate form for Iwasawa-decomposition. The same is true for $K_2$ since $(XY)^{-1} \in \SL( n, \ints_p )$.

  This proves that any two Iwasawa-decompositions are related in the way claimed by the theorem and by varying $X$ and $Y$, one generates all other Iwasawa-decompositions.
\end{proof}
\begin{remark}
  $X$ and $Y$ parameterize the $\left( \frac{n(n+1)}{2}-1 \right)$-family of decompositions mentioned in the remark after theorem \ref{thm:padiciwasawa}.
\end{remark}
\begin{corollary}
  For a matrix $M \in \SL( n, \rats_p )$, the norms of the dilatons are unique across all Iwasawa-decompositions of $M$.
\end{corollary}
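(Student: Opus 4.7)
The plan is to read off the conclusion essentially for free from Theorem~\ref{thm:nonuniqueness}, which already pins down how much freedom the Cartan torus has between two Iwasawa decompositions. First I would fix two Iwasawa decompositions $NAK$ and $N'A'K'$ of the same $M$ and invoke the theorem to write $A' = AY$ with $Y$ a diagonal matrix in $\SL(n,\ints_p^\times)$. Since $Y$ is diagonal, this reads entry-wise as $A'_{ii} = A_{ii}\, u_i$ with $u_i \in \ints_p^\times$, i.e.\ $|u_i|_p = 1$.

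Next I would use multiplicativity of the $p$-adic norm to conclude $|A'_{ii}|_p = |A_{ii}|_p$ for each $i$. This already says the norms of the diagonal entries of the Cartan torus are invariant under the choice of Iwasawa decomposition; it remains to translate this back to the dilatons themselves.

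For the translation, I would use the parameterization in \eqref{eqn:dilatons}, which (with the convention $y_0 = y_n = 1$) gives $A_{ii} = y_i/y_{i-1}$. Telescoping the product yields $y_k = \prod_{i=1}^{k} A_{ii}$, and the same identity holds with primes. Applying the norm and using the previous step gives
\begin{equation*}
|y_k|_p = \prod_{i=1}^{k} |A_{ii}|_p = \prod_{i=1}^{k} |A'_{ii}|_p = |y'_k|_p
\end{equation*}
for every $k \in \{1, \dots, n-1\}$, which is exactly the claim.

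There is no real obstacle here; the work was done in Theorem~\ref{thm:nonuniqueness}. The only point that requires a moment's thought is verifying that the freedom $Y$ lives in the diagonal torus of $\SL(n, \ints_p^\times)$ and not in something larger, so that it really does preserve each diagonal entry up to a unit rather than mixing them — but this is immediate from the structure of $Y$ in the theorem.
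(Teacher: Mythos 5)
Your proposal is correct and follows the same route as the paper: invoke Theorem~\ref{thm:nonuniqueness} to write $A' = AY$ with $Y$ diagonal over $\ints_p^\times$, note the diagonal entries' norms are unchanged, and telescope through the parameterization \eqref{eqn:dilatons} to recover the dilaton norms. Your write-up merely makes the telescoping step more explicit than the paper's ``$y_1$ unchanged, hence $y_2$, etcetera.''
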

\begin{proof}
  By theorem \ref{thm:nonuniqueness}, the semisimple part of any Iwasawa decomposition of $M$ is of the form $AY$ where $A$ is the semisimple part of some Iwasawa-decomposition and $Y \in \SL( n, \ints_p^\times )$ is diagonal, hence the norms of the diagonal elements are unchanged. Regarding the dilatons in \eqref{eqn:dilatons}, we get that the norm of $y_1$ in unchanged, and hence also that of $y_2$ etcetera.
\end{proof}

Next we prove two lemmas regarding minors which will be useful later.

\begin{lemma}
  An $m\times n$ matrix $M$ obeys
  \begin{equation}
    \begin{aligned}
      M\left(
      \begin{smallmatrix}
        r_{1} & \dots & r_{k} \\
        c_{1} & \dots & c_{k} \\
      \end{smallmatrix}
      \right)
      M\left(
      \begin{smallmatrix}
        r_{2} & \dots & r_{k} \\
        d_{2} & \dots & d_{k} \\
      \end{smallmatrix}
      \right)
      = \sum_{a=1}^k (-1)^{a+1}
      M\left(
      \begin{smallmatrix}
        r_{1} & r_{2} & \dots & r_{k} \\
        c_{a} & d_{2} & \dots & d_{k} \\
      \end{smallmatrix}
      \right)
      M\left(
      \begin{smallmatrix}
        r_{2} & \dots & r_{a} & r_{a+1} & \dots & r_{k} \\
        c_{1} & \dots & c_{a-1} & c_{a+1} & \dots & c_{k} \\
      \end{smallmatrix}
      \right)
    \end{aligned}
  \end{equation}
  where $r_i \in \{1, \dots, m\}$ and $c_i \in \{1, \dots, n\}$ and $d_i \in \{1, \dots, n\}$ and $k \in \nats$.
  \label{lemma:1}
\end{lemma}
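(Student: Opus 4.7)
The plan is to reduce the identity to a classical Plücker relation via two Laplace expansions along row $r_1$. I would first expand the leading minor of the LHS along row $r_1$, writing $M\!\left(\begin{smallmatrix} r_1 & \dots & r_k \\ c_1 & \dots & c_k \end{smallmatrix}\right) = \sum_{a=1}^{k} (-1)^{a+1} M_{r_1, c_a}\, N_a$ with $N_a := M\!\left(\begin{smallmatrix} r_2 & \dots & r_k \\ c_1 & \dots & \hat{c}_a & \dots & c_k \end{smallmatrix}\right)$. The LHS then takes the form $\sum_a (-1)^{a+1} N_a\, M_{r_1,c_a}\, M\!\left(\begin{smallmatrix} r_2 & \dots & r_k \\ d_2 & \dots & d_k \end{smallmatrix}\right)$.

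Second, I would Laplace-expand each RHS minor $M\!\left(\begin{smallmatrix} r_1 & r_2 & \dots & r_k \\ c_a & d_2 & \dots & d_k \end{smallmatrix}\right)$ along row $r_1$. Its $c_a$-contribution, $M_{r_1,c_a}\, M\!\left(\begin{smallmatrix} r_2 & \dots & r_k \\ d_2 & \dots & d_k \end{smallmatrix}\right)$, after multiplication by $(-1)^{a+1}N_a$ and summation in $a$, exactly reproduces the LHS and therefore cancels it. The remaining $d_b$-contributions collect, for each $b \in \{2, \dots, k\}$, into a residual sum
\begin{equation*}
S_b \;=\; \sum_{a=1}^{k}(-1)^{a+1}\, N_a\, M\!\left(\begin{smallmatrix} r_2 & \dots & r_k \\ c_a & d_2 & \dots & \hat{d}_b & \dots & d_k \end{smallmatrix}\right),
\end{equation*}
so the lemma is equivalent to showing $S_b = 0$ for every such $b$.

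Third, I identify each $S_b$ as a classical Plücker relation for the maximal (i.e. $(k-1)\times(k-1)$) minors of the $(k-1) \times n$ submatrix $M|_{r_2,\dots,r_k}$: freeze the $k-2$ ``common'' columns $d_2, \dots, \hat{d}_b, \dots, d_k$ and let the remaining column range over the $k$-element set $\{c_1, \dots, c_k\}$. The Plücker quadratic relation in this setting is precisely $S_b = 0$, once an overall sign $(-1)^{k-2}$ is absorbed that accounts for moving $c_a$ into the first column slot via $k-2$ transpositions.

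The main obstacle I anticipate is sign bookkeeping: the two Laplace expansions and the column-reordering in the Plücker step must all be tracked carefully. If one prefers to avoid citing the Plücker relation externally, an alternative is to prove $S_b = 0$ by induction on $k$, Laplace-expanding the second factor of $S_b$ along its $c_a$ column and swapping summation orders to invoke the $(k-1)$-level inductive hypothesis.
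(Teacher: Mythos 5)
Your argument is correct and shares its skeleton with the paper's proof: both expand the first factor of the left-hand side along row $r_1$, expand the first factor of each right-hand-side term, observe that the leading contributions reproduce the left-hand side, and then show that the residual terms vanish. The difference is which line you expand the right-hand-side minor along, and hence what the residual identity is. You expand $M\left(\begin{smallmatrix} r_1 & r_2 & \dots & r_k \\ c_a & d_2 & \dots & d_k \end{smallmatrix}\right)$ along the row $r_1$, so your residual $S_b$ is the one-term-exchange Grassmann--Pl\"ucker relation for the maximal minors of the $(k-1)\times n$ submatrix on rows $r_2,\dots,r_k$ --- a true but nontrivial quadratic identity that you must either cite or prove separately (your proposed induction would do it, at the cost of essentially redoing the lemma one level down). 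The paper instead expands the same minor down its first column, extracting the entries $M\left(\begin{smallmatrix} r_b \\ c_a \end{smallmatrix}\right)$ for $b=1,\dots,k$; after exchanging the two summations, the residual for each fixed $b\in\{2,\dots,k\}$ is $\sum_{a=1}^k (-1)^{a+1} M\left(\begin{smallmatrix} r_b \\ c_a \end{smallmatrix}\right) M\left(\begin{smallmatrix} r_2 & \dots & r_a & r_{a+1} & \dots & r_k \\ c_1 & \dots & c_{a-1} & c_{a+1} & \dots & c_k \end{smallmatrix}\right)$, which is itself the Laplace expansion of the order-$k$ minor $M\left(\begin{smallmatrix} r_b & r_2 & \dots & r_k \\ c_1 & c_2 & \dots & c_k \end{smallmatrix}\right)$ and vanishes because the row $r_b$ is repeated. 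That observation keeps the proof entirely self-contained and avoids the sign bookkeeping you worry about, whereas your route imports the Pl\"ucker relation but makes the kinship with the classical Pl\"ucker identities explicit. Two minor points: the lemma only needs the weighted sum $\sum_{b=2}^k (-1)^{b+1} M\left(\begin{smallmatrix} r_1 \\ d_b \end{smallmatrix}\right) S_b$ to vanish, so ``equivalent to $S_b=0$ for every $b$'' is a slight overstatement (harmless, since each $S_b$ does vanish); and any overall sign in your Pl\"ucker step is immaterial because the relation equals zero.
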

\begin{proof}
  For readability, we will drop $r$ and write $r_{i}$ just as $i$. To prove the identity, we expand both sides using Laplace expansion. Start by expanding the first factor of the left hand side as
  \begin{align}
    \begin{aligned}
      \mathrm{LHS} ={}&
      M\left(
      \begin{smallmatrix}
        1 & \dots & k & \\
        c_{1} & \dots & c_{k} \\
      \end{smallmatrix}
      \right)
      M\left(
      \begin{smallmatrix}
        2 & \dots & k \\
        d_{2} & \dots & d_{k} \\
      \end{smallmatrix}
      \right)
      = \\
      ={}&
      \sum_{a=1}^k (-1)^{a+1}
      M\left(
      \begin{smallmatrix}
        1 \\
        c_{a} \\
      \end{smallmatrix}
      \right)
      M\left(
      \begin{smallmatrix}
        2 & \dots & a & a+1 & \dots & k \\
        c_{1} & \dots & c_{a-1} & c_{a+1} & \dots & c_k \\
      \end{smallmatrix}
      \right)
      M\left(
      \begin{smallmatrix}
        2 & \dots & k \\
        d_{2} & \dots & d_{k} \\
      \end{smallmatrix}
      \right)
      .
    \end{aligned}
  \end{align}
  Expand the first factor in the sum of the right hand side as
  \begin{equation}
    \begin{aligned}
      \mathrm{RHS} &={}
      \sum_{a=1}^k (-1)^{a+1}
      M\left(
      \begin{smallmatrix}
        1 & 2 & \dots & k \\
        c_{a} & d_{2} & \dots & d_{k} \\
      \end{smallmatrix}
      \right)
      M\left(
      \begin{smallmatrix}
        2 & \dots & a & a+1 & \dots & k \\
        c_{1} & \dots & c_{a-1} & c_{a+1} & \dots & c_{k} \\
      \end{smallmatrix}
      \right)
      = \\
      &
      \begin{aligned}
        ={}
        \sum_{a=1}^k (-1)^{a+1}
        &
        \Bigg(
        \underbrace{
          M\left(
          \begin{smallmatrix}
            1 \\
            c_{a} \\
          \end{smallmatrix}
          \right)
          M\left(
          \begin{smallmatrix}
            2 & \dots & k \\
            d_{2} & \dots & d_{k} \\
          \end{smallmatrix}
          \right)
        }_{\mathrm{I}}
        +
        \underbrace{
          \sum_{b=2}^{k} (-1)^{b+1}
          M\left(
          \begin{smallmatrix}
            b \\
            c_{a} \\
          \end{smallmatrix}
          \right)
          M\left(
          \begin{smallmatrix}
            1 & \dots & b-1 & b+1 & \dots & k \\
            d_{2} & \dots & d_{b} & d_{b+1} & \dots & d_{k} \\
          \end{smallmatrix}
          \right)
        }_{\mathrm{II}}
        \Bigg)
        \\
        &
        M\left(
        \begin{smallmatrix}
          2 & \dots & a & a+1 & \dots & k \\
          c_{1} & \dots & c_{a-1} & c_{a+1} & \dots & c_{k} \\
        \end{smallmatrix}
        \right)
        .
      \end{aligned}
    \end{aligned}
  \end{equation}
  The term labelled I corresponds to the left hand side. The term labelled II vanishes according to
  \begin{equation}
    \begin{aligned}
      0 ={}& 
      M\left(
      \begin{smallmatrix}
        b & 2 & \dots & k \\
        c_1 & c_2 & \dots & c_{k} \\
      \end{smallmatrix}
      \right)
      =
      \sum_{a=1}^{k}
      (-1)^{a+1}
      M\left(
      \begin{smallmatrix}
        b \\
        c_a \\
      \end{smallmatrix}
      \right)
      M\left(
      \begin{smallmatrix}
        2   & \dots & a       & a+1     & \dots & k \\
        c_1 & \dots & c_{a-1} & c_{a+1} & \dots & c_k \\
      \end{smallmatrix}
      \right)
    \end{aligned}
  \end{equation}
  for $b \in \{2, \dots, k\}$, due to antisymmetry of minors.
\end{proof}
\begin{remark}
  The lemma holds true as stated but the assertion is trivial unless $k \leq \min\{m,n\}$ and all $r$'s, all $c$'s as well as all $d$'s are different.
\end{remark}
\begin{remark}
  A special case of the lemma is the identity
  \begin{equation}
    \begin{aligned}
      M\left(
      \begin{smallmatrix}
        r_{1} & \dots & r_{k} & r_{k+1} \\
        c_{1} & \dots & c_{k} & r_{k+1} \\
      \end{smallmatrix}
      \right)
      M\left(
      \begin{smallmatrix}
        r_{2} & \dots & r_{k+1} \\
        r_{2} & \dots & r_{k+1} \\
      \end{smallmatrix}
      \right)
      = \sum_{a=1}^k (-1)^{a+1}
      M\left(
      \begin{smallmatrix}
        r_{1} & r_{2} & \dots & r_{k+1} \\
        c_{a} & r_{2} & \dots & r_{k+1} \\
      \end{smallmatrix}
      \right)
      M\left(
      \begin{smallmatrix}
        r_{2} & \dots & r_{a} & r_{a+1} & \dots & r_{k} &  r_{k+1} \\
        c_{1} & \dots & c_{a-1} & c_{a+1} & \dots & c_{k} & r_{k+1} \\
      \end{smallmatrix}
      \right)
    \end{aligned}
    \label{eqn:speciallemma1}
  \end{equation}
  which will be used in the proof of lemma \ref{lemma:2}.
\end{remark}

\begin{lemma}
  \label{lemma:2}
  An $m\times n$ matrix M obeys the following identity involving the determinant of a $k\times k$-matrix of minors of $M$
  \begin{equation}
    \begin{aligned}
      &
      \left|
      \begin{array}{cccc}
        M\left(
        \begin{smallmatrix}
          r_1 & r_2 & \dots & r_{k+1} \\
          c_1 & r_2 & \dots & r_{k+1} \\
        \end{smallmatrix}
        \right)
        &
        M\left(
        \begin{smallmatrix}
          r_1 & r_2 & \dots & r_{k+1} \\
          c_2 & r_2 & \dots & r_{k+1} \\
        \end{smallmatrix}
        \right)
        &
        \dots
        &
        M\left(
        \begin{smallmatrix}
          r_1 & r_2 & \dots & r_{k+1} \\
          c_k & r_2 & \dots & r_{k+1} \\
        \end{smallmatrix}
        \right)
        \\
        M\left(
        \begin{smallmatrix}
          r_2 & r_3 & \dots & r_{k+1} \\
          c_1 & r_3 & \dots & r_{k+1} \\
        \end{smallmatrix}
        \right)
        &
        M\left(
        \begin{smallmatrix}
          r_2 & r_3 & \dots & r_{k+1} \\
          c_2 & r_3 & \dots & r_{k+1} \\
        \end{smallmatrix}
        \right)
        &
        \dots
        &
        M\left(
        \begin{smallmatrix}
          r_2 & r_3 & \dots & r_{k+1} \\
          c_k & r_3 & \dots & r_{k+1} \\
        \end{smallmatrix}
        \right)
        \\
        \vdots & \vdots & \ddots & \vdots
        \\
        M\left(
        \begin{smallmatrix}
          r_k & r_{k+1} \\
          c_1 & r_{k+1} \\
        \end{smallmatrix}
        \right)
        &
        M\left(
        \begin{smallmatrix}
          r_k & r_{k+1} \\
          c_2 & r_{k+1} \\
        \end{smallmatrix}
        \right)
        &
        \dots
        &
        M\left(
        \begin{smallmatrix}
          r_k & r_{k+1} \\
          c_k & r_{k+1} \\
        \end{smallmatrix}
        \right)
      \end{array}
      \right|
      = \\
      ={}&
      M\left(
      \begin{smallmatrix}
        r_1 & r_{2} & \dots & r_k & r_{k+1} \\
        c_1 & c_{2} & \dots & c_k & r_{k+1} \\
      \end{smallmatrix}
      \right)
      M\left(
      \begin{smallmatrix}
        r_2 & \dots & r_{k+1} \\
        r_2 & \dots & r_{k+1} \\
      \end{smallmatrix}
      \right)
      \dots
      M\left(
      \begin{smallmatrix}
        r_{k} & r_{k+1} \\
        r_{k} & r_{k+1} \\
      \end{smallmatrix}
      \right)
    \end{aligned}
  \end{equation}
  where $r_i \in \{1, \dots, m\}$ and $c_i \in \{1, \dots, n\}$ and $k \in \nats$.
\end{lemma}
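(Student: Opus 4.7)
The plan is to proceed by induction on $k$. The base case $k=1$ is trivial: the left-hand side is the $1 \times 1$ determinant with sole entry $M\left(\begin{smallmatrix} r_1 & r_2 \\ c_1 & r_2 \end{smallmatrix}\right)$, and the product on the right-hand side is empty (it runs over factors of decreasing length from $k$ down to $2$, so $k=1$ yields no factors at all). Hence both sides coincide with $M\left(\begin{smallmatrix} r_1 & r_2 \\ c_1 & r_2 \end{smallmatrix}\right)$.

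For the inductive step, assume the identity at level $k-1$ and expand the determinant on the left-hand side by Laplace expansion along its first row. The cofactor attached to the entry in row $1$, column $a$, i.e.\ the determinant of the $(k-1) \times (k-1)$ submatrix obtained by deleting the first row and the $a$-th column, is itself of exactly the shape appearing in the lemma, but with the ambient row sequence $(r_1, \dots, r_{k+1})$ shifted to $(r_2, \dots, r_{k+1})$ and the column sequence $(c_1, \dots, c_k)$ truncated to $(c_1, \dots, c_{a-1}, c_{a+1}, \dots, c_k)$. The pinned rightmost column $r_{k+1}$ persists unchanged through this shift, which is what makes the induction go through.

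Applying the induction hypothesis to each cofactor produces a common tail $\prod_{j=3}^{k} M\left(\begin{smallmatrix} r_j & \dots & r_{k+1} \\ r_j & \dots & r_{k+1} \end{smallmatrix}\right)$ that can be pulled out of the Laplace sum. What remains inside the sum is precisely the right-hand side of equation~\eqref{eqn:speciallemma1}, the specialization of Lemma~\ref{lemma:1} recorded in the preceding remark. Invoking that identity collapses the alternating sum into the single product $M\left(\begin{smallmatrix} r_1 & r_2 & \dots & r_k & r_{k+1} \\ c_1 & c_2 & \dots & c_k & r_{k+1} \end{smallmatrix}\right) M\left(\begin{smallmatrix} r_2 & \dots & r_{k+1} \\ r_2 & \dots & r_{k+1} \end{smallmatrix}\right)$, and reinstating the common tail gives exactly the right-hand side of the lemma.

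The main obstacle is purely bookkeeping: one must carefully verify that the cofactors emerging from Laplace expansion have exactly the index pattern required by the induction hypothesis, and in particular that the pinned $r_{k+1}$ column survives the shift $r_i \mapsto r_{i+1}$ correctly so that it continues to play the role of the rightmost column in both the row and column index strings of the smaller identity. Once the indices are aligned, equation~\eqref{eqn:speciallemma1} is tailor-made to close the induction; no ingredients beyond the two preceding lemmas are needed.
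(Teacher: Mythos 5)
Your proposal is correct and follows essentially the same route as the paper's own proof: induction on $k$, Laplace expansion of the determinant along its first row, application of the induction hypothesis to the resulting cofactors (whose index pattern indeed matches the lemma with rows shifted to $r_2,\dots,r_{k+1}$ and one column deleted), extraction of the common tail of principal minors, and finally the specialization \eqref{eqn:speciallemma1} of Lemma~\ref{lemma:1} to collapse the alternating sum. No gaps; the base case and the bookkeeping of the pinned column $r_{k+1}$ are handled exactly as in the paper.
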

\begin{proof}
  The proof works by induction. The base case $k=1$ trivial. Assume that the formula holds for $k \leq q-1$ for some $q-1\in\nats$. Expanding the determinant for $k = q$ along the first row and using the induction hypothesis on the remaining $(q-1)\times(q-1)$-determinants gives
  \begin{equation}
    \begin{aligned}
      \mathrm{LHS} ={}&
      \sum_{a=1}^q
      (-1)^{a+1}
      M\left(
      \begin{smallmatrix}
        r_1 & r_{2} & \dots & r_{q+1} \\
        c_a & r_{2} & \dots & r_{q+1} \\
      \end{smallmatrix}
      \right)
      M\left(
      \begin{smallmatrix}
        r_2 & \dots & r_a & r_{a+1} & \dots & r_q & r_{q+1} \\
        c_1 & \dots & c_{a-1} & c_{a+1} & \dots & c_q & r_{q+1} \\
      \end{smallmatrix}
      \right)
      M\left(
      \begin{smallmatrix}
        r_3 & \dots & r_{q+1} \\
        r_3 & \dots & r_{q+1} \\
      \end{smallmatrix}
      \right)
      \dots
      M\left(
      \begin{smallmatrix}
        r_{q} & r_{q+1} \\
        r_{q} & r_{q+1} \\
      \end{smallmatrix}
      \right)
      = \\
      ={}& 
      M\left(
      \begin{smallmatrix}
        r_1 & \dots & r_{q} & r_{q+1} \\
        c_1 & \dots & c_{q} & r_{q+1} \\
      \end{smallmatrix}
      \right)
      M\left(
      \begin{smallmatrix}
        r_2 & \dots & r_{q+1} \\
        r_2 & \dots & r_{q+1} \\
      \end{smallmatrix}
      \right)
      M\left(
      \begin{smallmatrix}
        r_3 & \dots & r_{q+1} \\
        r_3 & \dots & r_{q+1} \\
      \end{smallmatrix}
      \right)
      \dots
      M\left(
      \begin{smallmatrix}
        r_{q} & r_{q+1} \\
        r_{q} & r_{q+1} \\
      \end{smallmatrix}
      \right)
    \end{aligned}
  \end{equation}
  where we have used \eqref{eqn:speciallemma1}. Peano's axiom of induction now establishes the lemma.
\end{proof}

We now have what we need to prove a formula for the norms of the dilatons.
\begin{thm}[Norms of the dilatons of an $\SL( n, \rats_p )$-matrix]
  \label{thm:piwasawa}
  The norms of the dilatons in the Iwasawa-decomposition of a matrix $M \in \SL( n, \rats_p )$ are given by
  \begin{equation}
    \left| y_{n-k} \right|_p =
    \left(
    \max_{\sigma \in \Theta_{k}^{n}}
    \left\{ \left|
      M\left(
      \begin{smallmatrix}
        n-k+1 & \dots & n\\
        \sigma(1) & \dots & \sigma(k) \\
      \end{smallmatrix}
      \right)
    \right|_p \right\}
    \right)^{-1}
    \label{eqn:min}
    \qtextq{where}
    k \in \{1, \dots, n-1\}
  \end{equation}
  and $\Theta_k^{n}$ detones the set of all ordered subsets of $\{1, \dots, n\}$ of order $k$.\footnote{In words: The norm of the dilaton $y_{n-k}$ is the inverse of the norm of the largest anti-leading principal minor of order $k$. The formula produces the desired result $y_0 = y_n = 1$ for $k=n$ and $k=0$ respectively.}
  
  Alternatively in terms of the generalized Pl\"ucker coordinates $\rho_k$,
  \begin{equation}
    \left| y_{n-k} \right|_p =
    \left(
    \max_{x \in \rho_k}
    \left\{ \left|
      x
    \right|_p \right\}
    \right)^{-1}
    \qtextq{where}
    k \in \{1, \dots, n-1\}
    .
  \end{equation}
\end{thm}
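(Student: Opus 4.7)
The plan is to exploit the uniqueness of the dilaton norms established in the corollary following theorem~\ref{thm:nonuniqueness}: since the norms are the same for every Iwasawa decomposition of $M$, I only need to read them off from any one $NAK$-decomposition. Pick such a decomposition and form $NA = MK^{-1}$. Because $N$ is unit upper triangular and $A$ is diagonal, $NA$ is upper triangular with diagonal entries $A_{11},\ldots,A_{nn}$, so its structure is very rigid.

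Next I would compute all order-$k$ minors of $NA$ with row set $\{n-k+1,\ldots,n\}$. Upper triangularity forces every such minor to vanish unless the column set is also $\{n-k+1,\ldots,n\}$: any other $\sigma \in \Theta_k^n$ contains some $c<n-k+1$, and the $c$-th column of the submatrix is then zero because $(NA)_{ic}=0$ for every $i\geq n-k+1>c$. For the principal choice the submatrix is itself upper triangular, and the parameterization of $A$ makes its determinant telescope:
\begin{equation*}
  (NA)\!\left(\begin{smallmatrix} n-k+1 & \dots & n \\ n-k+1 & \dots & n \end{smallmatrix}\right)
  = \prod_{i=n-k+1}^{n} A_{ii}
  = \frac{y_{n-k+1}}{y_{n-k}}\cdot\frac{y_{n-k+2}}{y_{n-k+1}}\cdots\frac{1}{y_{n-1}}
  = \frac{1}{y_{n-k}}.
\end{equation*}
Hence $\max_{\sigma}|(NA)(\ldots,\sigma)|_p = |y_{n-k}|_p^{-1}$.

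The final step is to transfer this maximum back from $NA$ to $M$. Applying the Cauchy-Binet formula to the product $M\cdot K^{-1}$ on the row set $R=\{n-k+1,\ldots,n\}$ gives
\begin{equation*}
  (MK^{-1})\!\left(\begin{smallmatrix} R \\ \tau \end{smallmatrix}\right)
  = \sum_{\sigma \in \Theta_k^n}
   M\!\left(\begin{smallmatrix} R \\ \sigma \end{smallmatrix}\right)
   K^{-1}\!\left(\begin{smallmatrix} \sigma \\ \tau \end{smallmatrix}\right).
\end{equation*}
Since $K,K^{-1}\in \SL(n,\ints_p)$ have entries in $\ints_p$, every $k\times k$ minor of $K^{-1}$ lies in $\ints_p$, so the ultrametric inequality yields $|(MK^{-1})(R,\tau)|_p \leq \max_{\sigma}|M(R,\sigma)|_p$. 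Writing $M=(MK^{-1})K$ and running the same bound backwards gives the reverse inequality, so the two maxima agree. Combining this with the computation above yields $\max_{\sigma}|M(R,\sigma)|_p = |y_{n-k}|_p^{-1}$, which is the claimed formula; the generalized Pl\"ucker reformulation is then immediate once $\rho_k$ is identified with the collection of order-$k$ anti-leading minors.

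The main obstacle is really the last paragraph: one has to be honest that the sup norm of minors with a fixed row set is preserved under right multiplication by an element of $\SL(n,\ints_p)$. Once the Cauchy-Binet expansion is in hand this is a routine two-line ultrametric estimate, but it is the only substantive step and is the reason the formula depends only on $M$ (and not on the particular non-unique $NAK$-decomposition chosen at the start).
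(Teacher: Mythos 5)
Your proof is correct, but it takes a genuinely different route from the paper's. The paper proves \eqref{eqn:min} by induction on $n$: it performs the strong UL-decomposition with the pivot column $a$ chosen to contain the bottom-row entry of maximal norm, peels off the bottom row into $\SL(n,\ints_p)$, applies the induction hypothesis to the remaining $(n-1)\times(n-1)$ block (whose entries are ratios of anti-leading minors), evaluates the resulting determinants of matrices of minors via lemma \ref{lemma:2}, and finally uses a Laplace-expansion cancellation argument plus the ultrametric inequality to show that restricting the maximum to minors containing column $a$ loses nothing. That argument is long but constructive: it simultaneously yields an explicit algorithm for a full $NAK$-decomposition. Your argument instead takes the existence of some decomposition (theorem \ref{thm:padiciwasawa}) and the uniqueness of the dilaton norms (the corollary to theorem \ref{thm:nonuniqueness}) as inputs, and then reduces everything to the observation that the quantity $\max_{\sigma}|M(\begin{smallmatrix} n-k+1 & \dots & n\\ \sigma(1) & \dots & \sigma(k)\end{smallmatrix})|_p$ is invariant under right multiplication by $\SL(n,\ints_p)$, which follows from Cauchy--Binet, the integrality of the minors of $K^{\pm1}$, and the ultrametric inequality; the upper-triangular factor $NA$ then has exactly one nonvanishing minor on the bottom $k$ rows, equal to $1/y_{n-k}$. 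All steps check out (in particular $K^{-1}\in\SL(n,\ints_p)$ since the adjugate of an integral matrix is integral and $\det K=1$, and the nonsingularity of $M$ guarantees the maximum is nonzero). What you have produced is essentially an elementary, self-contained version of the representation-theoretic derivation the paper only sketches in its final section: your Cauchy--Binet estimate is precisely the statement that the sup norm on the vector of generalized Pl\"ucker coordinates is invariant under the maximal compact subgroup. The trade-off is that your proof is far shorter but non-constructive, whereas the paper's inductive proof doubles as an algorithm for computing $N$ and $A$ themselves.
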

\begin{proof}
  The proof works by induction.
  %
  Suppose that the formula holds up to and including $\SL( n-1, \rats_p )$ and consider $\SL( n, \rats_p )$.

  Restrict to the case $\left| M_{na} \right|_p \geq \left| M_{ni} \right|_p$ for $i\in \{1, \dots, n\}$, i.e.\ the element with the largest $p$-adic norm sits in column $a$. If there is no unique such element, any one of the largest elements on the bottom row of $M$ may play the role of $M_{na}$. Note that $M_{na} \neq 0$ since otherwise the bottom row would be a zero-row, rendering $M$ singular. Performing a strong UL-decomposition on $M$ where we move column $a$ to the rightmost place gives 
  \begin{equation}
    M = V \Delta \Lambda \Pi_a^{-1} = V \Delta
    \left(
    \begin{array}{ccccc}
      1 \\
      \tilde{M}\left(
      \begin{smallmatrix}
        2 & 3 & \dots & n \\
        1 & 3 & \dots & n \\
      \end{smallmatrix}
      \right)
      \eta_{1}
      & 1
      \\
      \vdots & \vdots & \ddots \\
      \tilde{M}\left(
      \begin{smallmatrix}
        n-1 & n \\
        1 & n\\
      \end{smallmatrix}
      \right)
      \eta_{n-2}
      &
      \tilde{M}\left(
      \begin{smallmatrix}
        n-1 & n \\
        2 & n \\
      \end{smallmatrix}
      \right)
      \eta_{n-2}
      & \dots &
      1
      \\
      \tilde{M}\left(
      \begin{smallmatrix}
        n \\
        1 \\
      \end{smallmatrix}
      \right)
      \eta_{n-1}
      &
      \tilde{M}\left(
      \begin{smallmatrix}
        n \\
        2 \\
      \end{smallmatrix}
      \right)
      \eta_{n-1}
      & \dots &
      \tilde{M}\left(
      \begin{smallmatrix}
        n \\
        n-1 \\
      \end{smallmatrix}
      \right)
      \eta_{n-1}
      &
      1
    \end{array}
    \right)
    \Pi_a^{-1}
  \end{equation}
  where we have denoted $\tilde{M} = M \Pi_a$. Note that $\Pi_a^{-1} \in \SL( n, \ints_p )$. Note furthermore that the bottom row in the matrix $\Lambda$ is simply a permutation of the bottom row of $M$ divided by $
  \tilde{\eta}_{n-1}^{-1}
  =
  \tilde{M}\left(
  \begin{smallmatrix}
    n \\
    n \\
  \end{smallmatrix}
  \right)
  =
  M_{na}
  $. Because $M_{na}$ is assumed to have the largest $p$-adic norm, we get that every element in the bottom row of $\Lambda$ is a $p$-adic integer. Thanks to the unit lower triangular form, the bottom row easily factorizes out on the right and we are left with
  \begin{equation}
    \label{eqn:extractedrow}
    M = V \Delta
    \left(
    \begin{array}{ccccc}
      1 \\
      \tilde{M}\left(
      \begin{smallmatrix}
        2 & 3 & \dots & n \\
        1 & 3 & \dots & n \\
      \end{smallmatrix}
      \right)
      \eta_{1}
      & 1
      \\
      \vdots & \vdots & \ddots \\
      \tilde{M}\left(
      \begin{smallmatrix}
        n-1 & n \\
        1 & n\\
      \end{smallmatrix}
      \right)
      \eta_{n-2}
      &
      \tilde{M}\left(
      \begin{smallmatrix}
        n-1 & n \\
        2 & n \\
      \end{smallmatrix}
      \right)
      \eta_{n-2}
      & \dots &
      1
      \\
      0
      &
      0
      & \dots &
      0
      &
      1
    \end{array}
    \right)
    R \Pi_a^{-1}
  \end{equation}
  where $R \in \SL( n, \ints_p )$ contains the bottom row of $\Lambda$. The block diagonal form implies that there will be no further contributions to the dilaton $y_{n-1}$ and its norm is now fixed at
  \begin{equation}
    \left| y_{n-1} \right|_p = \left| \eta_{n-1} \right|_p =
    \left|
    \frac{1}{
      \tilde{M}\left(
      \begin{smallmatrix}
        n \\
        n \\
      \end{smallmatrix}
      \right)
    }
    \right|_p
    =
    \left|
    M\left(
    \begin{smallmatrix}
      n \\
      a \\
    \end{smallmatrix}
    \right)
    \right|_p^{-1}
    =
    \left(
    \max_{\sigma \in \Theta_{1}^{n}}
    \left\{ \left|
      M\left(
      \begin{smallmatrix}
        n\\
        \sigma(1) \\
      \end{smallmatrix}
      \right)
    \right|_p \right\}
    \right)^{-1}
    ,
  \end{equation}
  again using the fact that the element $M_{na}$ has the largest $p$-adic norm. This expression is of the form \eqref{eqn:min}. Putting $n = 2$ here proves the base case $\SL(2, \rats_p)$. Next we treat the dilatons $y_{1}, \dots, y_{n-2}$.

  Note that we can write
  \begin{equation}
    \begin{aligned}
      &
      \left(
      \begin{array}{cccc}
        1 \\
        \tilde{M}\left(
        \begin{smallmatrix}
          2 & 3 & \dots & n \\
          1 & 3 & \dots & n \\
        \end{smallmatrix}
        \right)
        \eta_{1}
        & 1
        \\
        \vdots & \vdots & \ddots \\
        \tilde{M}\left(
        \begin{smallmatrix}
          n-1 & n \\
          1 & n\\
        \end{smallmatrix}
        \right)
        \eta_{n-2}
        &
        \tilde{M}\left(
        \begin{smallmatrix}
          n-1 & n \\
          2 & n \\
        \end{smallmatrix}
        \right)
        \eta_{n-2}
        & \dots &
        1
      \end{array}
      \right)
      =
      \\
      ={}&
      \left(
      \begin{array}{cccc}
        \tilde{M}\left(
        \begin{smallmatrix}
          1 & 2 & \dots & n \\
          1 & 2 & \dots & n \\
        \end{smallmatrix}
        \right)
        \eta_{0}
        &
        \tilde{M}\left(
        \begin{smallmatrix}
          1 & 2 & \dots & n \\
          2 & 2 & \dots & n \\
        \end{smallmatrix}
        \right)
        \eta_{0}
        &
        \dots
        &
        \tilde{M}\left(
        \begin{smallmatrix}
          1 & 2 & \dots & n \\
          n-1 & 2 & \dots & n \\
        \end{smallmatrix}
        \right)
        \eta_{0}
        \\
        \tilde{M}\left(
        \begin{smallmatrix}
          2 & 3 & \dots & n \\
          1 & 3 & \dots & n \\
        \end{smallmatrix}
        \right)
        \eta_{1}
        &
        \tilde{M}\left(
        \begin{smallmatrix}
          2 & 3 & \dots & n \\
          2 & 3 & \dots & n \\
        \end{smallmatrix}
        \right)
        \eta_{1}
        &
        \dots
        &
        \tilde{M}\left(
        \begin{smallmatrix}
          2 & 3 & \dots & n \\
          n-1 & 3 & \dots & n \\
        \end{smallmatrix}
        \right)
        \eta_{1}
        \\
        \vdots & \vdots & \ddots & \vdots \\
        \tilde{M}\left(
        \begin{smallmatrix}
          n-1 & n \\
          1 & n\\
        \end{smallmatrix}
        \right)
        \eta_{n-2}
        &
        \tilde{M}\left(
        \begin{smallmatrix}
          n-1 & n \\
          2 & n \\
        \end{smallmatrix}
        \right)
        \eta_{n-2}
        & \dots &
        \tilde{M}\left(
        \begin{smallmatrix}
          n-1 & n \\
          n-1 & n \\
        \end{smallmatrix}
        \right)
        \eta_{n-2}
      \end{array}
      \right)
      .
    \end{aligned}
  \end{equation}
  We apply the induction hypothesis to this $(n-1) \times (n-1)$-diagonal block of \eqref{eqn:extractedrow} and compute its contribution to the norm of the dilaton $y_{n-(k+1)}$\footnote{We choose to consider $y_{n-(k+1)} = y_{n-1-k}$ so that we get nice expressions in what follows. Later we send $k$ to $k-1$ to compare with \eqref{eqn:min}.} for $k\in\{1, \dots n-1\}$. The formula \eqref{eqn:min} implies that we need to compute minors like
  \begin{equation}
    \left|
    \begin{smallmatrix}
      \tilde{M}\left(
      \begin{smallmatrix}
        n-k & n-k+1 & \dots & n \\
        \tilde{\sigma}(1) & n-k+1 & \dots & n \\
      \end{smallmatrix}
      \right)
      \eta_{n-k-1}
      &
      \tilde{M}\left(
      \begin{smallmatrix}
        n-k & n-k+1 & \dots & n \\
        \tilde{\sigma}(2) & n-k+1 & \dots & n \\
      \end{smallmatrix}
      \right)
      \eta_{n-k-1}
      &
      \dots
      &
      \tilde{M}\left(
      \begin{smallmatrix}
        n-k & n-k+1 & \dots & n \\
        \tilde{\sigma}(k) & n-k+1 & \dots & n \\
      \end{smallmatrix}
      \right)
      \eta_{n-k-1}
      \\
      \tilde{M}\left(
      \begin{smallmatrix}
        n-k-1 & n-k-2 & \dots & n \\
        \tilde{\sigma}(1) & n-k-2 & \dots & n \\
      \end{smallmatrix}
      \right)
      \eta_{n-k-2}
      &
      \tilde{M}\left(
      \begin{smallmatrix}
        n-k-1 & n-k-2 & \dots & n \\
        \tilde{\sigma}(2) & n-k-2 & \dots & n \\
      \end{smallmatrix}
      \right)
      \eta_{n-k-2}
      &
      \dots
      &
      \tilde{M}\left(
      \begin{smallmatrix}
        n-k-1 & n-k-2 & \dots & n \\
        \tilde{\sigma}(k) & n-k-2 & \dots & n \\
      \end{smallmatrix}
      \right)
      \eta_{n-k-2}
      \\
      \vdots & \vdots & \ddots & \vdots \\
      \tilde{M}\left(
      \begin{smallmatrix}
        n-1 & n \\
        \tilde{\sigma}(1) & n\\
      \end{smallmatrix}
      \right)
      \eta_{n-2}
      &
      \tilde{M}\left(
      \begin{smallmatrix}
        n-1 & n \\
        \tilde{\sigma}(2) & n \\
      \end{smallmatrix}
      \right)
      \eta_{n-2}
      & \dots &
      \tilde{M}\left(
      \begin{smallmatrix}
        n-1 & n \\
        \tilde{\sigma}(k) & n \\
      \end{smallmatrix}
      \right)
      \eta_{n-2}
    \end{smallmatrix}
    \right|
  \end{equation}
  where $\tilde{\sigma} \in \Theta_{k}^{n-1}$. Using lemma \ref{lemma:2}, this evaluates to
  \begin{equation}
    \begin{aligned}
      &
      \tilde{M}\left(
      \begin{smallmatrix}
        n-k & \dots & n-1 & n \\
        \tilde{\sigma}(1) & \dots & \tilde{\sigma}(k) & n \\
      \end{smallmatrix}
      \right)
      \tilde{M}\left(
      \begin{smallmatrix}
        n-k+1 & \dots & n \\
        n-k+1 & \dots & n \\
      \end{smallmatrix}
      \right)
      \dots
      \tilde{M}\left(
      \begin{smallmatrix}
        n-1 & n \\
        n-1 & n \\
      \end{smallmatrix}
      \right)
      \eta_{n-k-1}
      \dots
      \eta_{n-2}
      = \\
      ={} &
      \tilde{M}\left(
      \begin{smallmatrix}
        n-k & \dots & n-1 & n \\
        \tilde{\sigma}(1) & \dots & \tilde{\sigma}(k) & n \\
      \end{smallmatrix}
      \right)
      \tilde{M}\left(
      \begin{smallmatrix}
        n-k & \dots & n \\
        n-k & \dots & n \\
      \end{smallmatrix}
      \right)
    \end{aligned}
    .
  \end{equation}
  The dilaton $y_{n-(k+1)}$ already has a contribution $
  \eta_{n-(k+1)} =
  \left(
  \tilde{M}\left(
  \begin{smallmatrix}
    n-k & \dots & n \\
    n-k & \dots & n \\
  \end{smallmatrix}
  \right)
  \right)^{-1}
  $ from the matrix $\Delta$. Multiplying these contributions together and considering all subsets $\tilde{\sigma} \in \Theta_{k}^{n-1}$ gives the final answer for the norm of the dilaton $y_{n-(k+1)}$ in the case $\left| M_{na}\right|_p \geq \left| M_{ni} \right|_p$ as
  \begin{equation}
    \left| y_{n-(k+1)} \right|_p =
    \left(
    \max_{\tilde{\sigma} \in \Theta_{k}^{n-1}}
    \left\{ \left|
      \tilde{M}\left(
      \begin{smallmatrix}
        n-k & \dots & n-1 & n\\
        \tilde{\sigma}(1) & \dots & \tilde{\sigma}(k) & n \\
      \end{smallmatrix}
      \right)
    \right|_p \right\}
    \right)^{-1}
    .
  \end{equation}
  Note that the exact form of the permutation matrix $\Pi_a$ becomes irrelevant, since the expression above anyway considers all order $k$ subsets of the columns $1$ through $n-1$ in $\tilde{M}$. Note furthermore that any minus signs present in $\Pi_a$ as discussed in footnote \ref{footnote:minussigns} make no difference since the minors sit inside a norm.

  We can think of the order $k$-subsets $\tilde{\sigma}\in \Theta_{k}^{n-1}$ as order $k+1$ subsets $\sigma\in \Theta{}_{k+1}^{n}$ restricted such that $\sigma(k+1) = n$. Writing the equation above in terms of $M$ and replacing $k$ by $k-1$ gives
  \begin{equation}
    \left| y_{n-k} \right|_p =
    \left(
    \max_{\sigma \in \Theta'{}_{k}^{n}}
    \left\{ \left|
      M\left(
      \begin{smallmatrix}
        n-k+1 & \dots & n\\
        \sigma(1) & \dots & \sigma(k) \\
      \end{smallmatrix}
      \right)
    \right|_p \right\}
    \right)^{-1}
    \label{eqn:specialmin}
  \end{equation}
  where the prime on $\Theta'{}_k^n$ indicates that all subsets must contain $a$. To finalize the proof, we must show that \eqref{eqn:min} reduces to \eqref{eqn:specialmin} in the case $\left| M_{na}\right|_p \geq \left| M_{ni} \right|_p$. This is equivalent to showing that for every anti-leading minor which doesn't include column $a$, one can find an anti-leading minor which does include column $a$ and whose $p$-adic norm is at least as large as the original minor. To see that this is true, pick an anti-leading minor of order $k$ which doesn't contain column $a$ and consider its norm\footnote{By operating inside of a norm, we don't need to keep track of the overall sign.}
  \begin{align}
    \label{eqn:candidateminor}
    \left|
    M\left(
    \begin{smallmatrix}
      n-k+1 & \dots & n\\
      \sigma_a(1) & \dots & \sigma_a(k) \\
    \end{smallmatrix}
    \right)
    \right|_p
    =
    \left|
    \sum_{i=1}^{k}
    (-1)^{i}
    M\left(
    \begin{smallmatrix}
      n \\
      \sigma_a(i) \\
    \end{smallmatrix}
    \right)
    M\left(
    \begin{smallmatrix}
      n-k+1 & \dots & n-k+i-1 & n-k+i & \dots & n-1 \\
      \sigma_a(1) & \dots & \sigma_a(i-1) & \sigma_a(i+1) & \dots & \sigma_a(k) \\
    \end{smallmatrix}
    \right)
    \right|_p
  \end{align}
  where $\sigma_a \in \Theta_k^n$ and $a \notin \sigma_a$. Define $(-1)^k u_i$ as the minor under consideration but with $\sigma_a(i)$ replaced by $a$. This minor is then already present in \eqref{eqn:specialmin}. Laplace expansion gives
  \begin{equation}
    \begin{aligned}
      {}& (-1)^{k} u_i \equiv 
      M\left(
      \begin{smallmatrix}
        n-k+1 & \dots & n-k+i-1 & n-k+i & n-k+i+1 & \dots & n \\
        \sigma_a(1) & \dots & \sigma_a(i-1) & a & \sigma_a(i+1) & \dots & \sigma_a(k) \\
      \end{smallmatrix}
      \right) = \\
      ={}&
      \sum_{j=1}^{i-1}
      (-1)^{k+j}
      M\left(
      \begin{smallmatrix}
        n \\
        \sigma_a(j) \\
      \end{smallmatrix}
      \right)
      M\left(
      \begin{smallmatrix}
        n-k+1 & \dots & n-k+j-1 & n-k+j & \dots & n-k+i-2 & n-k+i-1 & n-k+i & \dots & n-1 \\
        \sigma_a(1) & \dots & \sigma_a(j-1) & \sigma_a(j+1) & \dots & \sigma_a(i-1) & a & \sigma_a(i+1) & \dots & \sigma_a(k) \\
      \end{smallmatrix}
      \right) \\
      +{}&(-1)^{k+i}
      M\left(
      \begin{smallmatrix}
        n \\
        a \\
      \end{smallmatrix}
      \right)
      M\left(
      \begin{smallmatrix}
        n-k+1 & \dots & n-k+i-1 & n-k+i & \dots & n-1 \\
        \sigma_a(1) & \dots & \sigma_a(i-1) & \sigma_a(i+1) & \dots & \sigma_a(k) \\
      \end{smallmatrix}
      \right) \\
      +{}&
      \sum_{j=i+1}^{k}
      (-1)^{k+j}
      M\left(
      \begin{smallmatrix}
        n \\
        \sigma_a(j) \\
      \end{smallmatrix}
      \right)
      M\left(
      \begin{smallmatrix}
        n-k+1 & \dots & n-k+i-1 & n-k+i & n-k+i+1 & \dots & n-k+j-1 & n-k+j & \dots & n-1 \\
        \sigma_a(1) & \dots & \sigma_a(i-1) & a & \sigma_a(i+1) & \dots & \sigma_a(j-1) & \sigma_a(j+1) & \dots & \sigma_a(k) \\
      \end{smallmatrix}
      \right)
      .
    \end{aligned}
    \label{eqn:ni}
  \end{equation}
  We now show that the norm of the minor under consideration \eqref{eqn:candidateminor} can be expressed in terms of the norm of a sum of $u$'s as
  \begin{equation}
    \left|
    M\left(
    \begin{smallmatrix}
      n-k+1 & \dots & n\\
      \sigma_a(1) & \dots & \sigma_a(k) \\
    \end{smallmatrix}
    \right)
    \right|_p
    =
    \left|
    \frac{1}{
      M\left(
      \begin{smallmatrix}
        n \\
        a \\
      \end{smallmatrix}
    \right)}
    \sum_{i=1}^{k} 
    M\left(
    \begin{smallmatrix}
      n \\
      \sigma_a(i) \\
    \end{smallmatrix}
    \right)
    u_i
    \right|_p
    \label{eqn:cancelation}
    .
  \end{equation}
  By making recourse with \eqref{eqn:candidateminor}, it is easy to see that the terms in the right hand side of \eqref{eqn:candidateminor} come from the second line in the right hand side of \eqref{eqn:ni}. To see that the remaining terms cancel, consider the term inside the sum in the right hand side of \eqref{eqn:cancelation} with $i = I$ and $j = J$ where $J<I$ (coming from the first line of the right hand side of \eqref{eqn:ni})
  \begin{equation}
    M\left(
    \begin{smallmatrix}
      n \\
      \sigma_a(I) \\
    \end{smallmatrix}
    \right)
    (-1)^{J}
    M\left(
    \begin{smallmatrix}
      n \\
      \sigma_a(J) \\
    \end{smallmatrix}
    \right)
    M\left(
    \begin{smallmatrix}
      n-k+1 & \dots & n-k+J-1 & n-k+J & \dots & n-k+I-2 & n-k+I-1 & n-k+I & \dots & n-1 \\
      \sigma_a(1) & \dots & \sigma_a(J-1) & \sigma_a(J+1) & \dots & \sigma_a(I-1) & a & \sigma_a(I+1) & \dots & \sigma_a(k) \\
    \end{smallmatrix}
    \right)
    .
  \end{equation}
  This cancels with the term $i=J$ and $j=I$ (coming from the third line of the right hand side of \eqref{eqn:ni}) as is seen by writing
  \begin{equation}
    \begin{aligned}
      {}&
      M\left(
      \begin{smallmatrix}
        n \\
        \sigma_a(J) \\
      \end{smallmatrix}
      \right)
      (-1)^{I}
      M\left(
      \begin{smallmatrix}
        n \\
        \sigma_a(I) \\
      \end{smallmatrix}
      \right)\\
      {}&
      M\left(
      \begin{smallmatrix}
        n-k+1 & \dots & n-k+J-1 & n-k+J & n-k+J+1 & \dots & n-k+I-1 & n-k+I & \dots & n-1 \\
        \sigma_a(1) & \dots & \sigma_a(J-1) & a & \sigma_a(J+1) & \dots & \sigma_a(I-1) & \sigma_a(I+1) & \dots & \sigma_a(k) \\
      \end{smallmatrix}
      \right) = \\
      ={}&
      M\left(
      \begin{smallmatrix}
        n \\
        \sigma_a(J) \\
      \end{smallmatrix}
      \right)
      (-1)^{-J+1}
      M\left(
      \begin{smallmatrix}
        n \\
        \sigma_a(I) \\
      \end{smallmatrix}
      \right)\\
      {}&
      M\left(
      \begin{smallmatrix}
        n-k+1 & \dots & n-k+J-1 & n-k+J & \dots & n-k+I-2 & n-k+I-1 & n-k+I & \dots & n-1 \\
        \sigma_a(1) & \dots & \sigma_a(J-1) & \sigma_a(J+1) & \dots & \sigma_a(I-1) & a & \sigma_a(I+1) & \dots & \sigma_a(k) \\
      \end{smallmatrix}
      \right)
      .
    \end{aligned}
  \end{equation}
  This establishes \eqref{eqn:cancelation}. The ultrametric property of the $p$-adic norm now gives that the norm of the minor under consideration \eqref{eqn:cancelation} is dominated by the norm of the largest $u$
  \begin{equation}
    \left|
    \sum_{i=1}^{k} 
    \frac{
      M\left(
      \begin{smallmatrix}
        n \\
        \sigma_a(i) \\
      \end{smallmatrix}
      \right)
    }{
      M\left(
      \begin{smallmatrix}
        n \\
        a \\
      \end{smallmatrix}
    \right)}
    u_i
    \right|_p
    \leq
    \max \left\{
      \left|
      \frac{M_{n, \sigma_a(1)}}{M_{na}}
      u_1
      \right|_p 
      , \dots, 
      \left|
      \frac{M_{n, \sigma_a(k)}}{M_{na}}
      u_k
      \right|_p 
    \right\}
    \leq
    \max \left\{
      \left|
      u_1
      \right|_p 
      , \dots, 
      \left|
      u_k
      \right|_p 
    \right\}
  \end{equation}
  where we have used $\left| M_{na}\right|_p \geq \left| M_{ni} \right|_p$. Since the $u$'s are all present in \eqref{eqn:specialmin}, we can safely include the minor under consideration as its would never be picked over the $u$'s. This argument holds for all anti-leading minors of order $k$ and we may threfore include them all in the right hand side of \eqref{eqn:specialmin}, successfully reproducing \eqref{eqn:min}. Peano's axiom of induction now establishes the theorem.
\end{proof}
\begin{remark}
  The proof shows that in the case $\left| M_{na}\right|_p \geq \left| M_{ni} \right|_p$, the expression \eqref{eqn:min} reduces to \eqref{eqn:specialmin}.
\end{remark}
\begin{remark}
  The method of using strong UL-decomposition to reduce the problem of Iwasawa-decomposition of an $n\times n$-matrix to that of an $(n-1)\times (n-1)$-matrix can be iterated and defines an algorithm for a complete Iwasawa-decomposition $M = NAK$ of any matrix $M \in \SL( n, \rats_p )$. All other Iwasawa-decompositions can then be found by using theorem \ref{thm:nonuniqueness}. Using this method to derive a general formula for the matrix elements of $N$ and $A$ (and hence $K$) given $M$ seems feasible. Such a formula is expected to express also the axions in terms of a conditional clause like $\max$. It this endeavour, it would probably be convenient to use theorem \ref{thm:nonuniqueness} to impose some standard normalization on the axions and dilatons, for instance one may normalize the dilatons to be just pure powers of $p$.
\end{remark}

\section{Decomposing $\SL( n, \reals )$}
Throughout this section we take the liberty of using a wide array of indices, all of which range from $1$ to $n$. We employ the summation convention for repeated indices provided that one stands upstairs and the other downstairs.

We begin with a more precise statement of Iwasawa-decomposition for matrices in $\SL( n, \reals )$.
\begin{thm}[Iwasawa-decomposition for $\SL( n, \reals )$]
  A matrix $M \in \SL( n, \reals )$ may be written uniquely as
  \begin{equation}
    M = NAK
    \label{eqn:realiwasawa}
  \end{equation}
  where $N \in \SL( n, \reals )$ is unit upper triangular, $A \in \SL( n, \reals )$ is diagonal with positive entries and $K \in \SO( n )$. Furthermore, denoting the row-vectors in $M$ by $V_i$, $i \in \{1, \dots, n \}$, and parameterizing $N$ and $A$ as
  \begin{equation}
    N_{ij} =
    \begin{cases}
      1, & i = j \\
      x_{ij}, & i < j \\
      0, & i > j
    \end{cases}
    \qtextq{and}
    A_{ij} = \frac{y_i}{y_{i-1}} \delta_{ij}
    \qtextq{with}
    y_0 \equiv y_n \equiv 1
    ,
  \end{equation}
  we have that the axions and dilatons are given by
  \begin{align}
    x_{\mu \nu} ={} & y_{\nu-1}^2 \epsilon\left( V_\mu, V_{\nu+1}, \dots, V_n; V_{\nu}, V_{\nu+1}, \dots, V_n \right), \quad \mu < \nu, \label{eqn:axions} \qtext{and} \\
    y_{\mu}^{-2} ={} & \epsilon\left( V_{\mu+1}, \dots, V_n; V_{\mu+1}, \dots, V_n \right) \label{eqn:realdilatons}
  \end{align}
  where $\epsilon$ denotes the totally antisymmetric product
  \begin{equation}
	\begin{aligned}
      \epsilon\left( A_1, \dots, A_m; B_1, \dots, B_m \right) = \delta_{a_{1} \dash a_m}^{i_{1} \dash i_m} \left( A_{1} \right)^{a_{1}} \dots{} \left( V_m \right)^{a_m} \left( B_{1} \right)_{i_{1}} \dots{} \left( B_m \right)_{i_m}
	\end{aligned}
  \end{equation}
  where the $A$'s and $B$'s are $n$-vectors and 
  \begin{align}
    \delta_{a_{1} \dash a_m}^{i_{1} \dash i_m} = m! \delta_{\left[ a_{1} \right.}^{i_{1}} \dots \delta_{\left. a_{m} \right]}^{i_{m}} = \frac{1}{(n-m)!} \epsilon_{a_1 \dash a_m \alpha_{m+1} \dash \alpha_n} \epsilon^{i_1 \dash i_m \alpha_{m+1} \dash \alpha_n}
  \end{align}
  denotes the generalized Kronecker delta.
\end{thm}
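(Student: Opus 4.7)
The plan is to construct the decomposition by applying row-by-row Gram--Schmidt orthogonalisation to $M$ from the bottom upward, read off $N$ and $A$ from the coordinates of the rows of $M$ in the resulting orthonormal basis, and then translate the resulting expressions into the generalised Kronecker contractions appearing in the statement. Uniqueness comes for free: given two decompositions $N_1 A_1 K_1 = N_2 A_2 K_2$ of $M$, the matrix $(N_2 A_2)^{-1}(N_1 A_1) = K_2 K_1^{-1}$ is simultaneously upper triangular with positive diagonal and orthogonal, and hence equals the identity.

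For existence, I set $e_n = V_n / \|V_n\|$ and inductively, for $\mu = n-1, \ldots, 1$, define $e_\mu$ to be the unit vector obtained by orthogonalising $V_\mu$ against $\{e_{\mu+1}, \ldots, e_n\}$, with the sign chosen so that $V_\mu \cdot e_\mu > 0$. Packaging $e_1, \ldots, e_n$ as the rows of $K$, the matrix $MK^{-1}$ is upper triangular with positive diagonal entries $A_{\mu\mu} = \|V_\mu - P_\mu V_\mu\|$, where $P_\mu$ denotes orthogonal projection onto $\mathrm{span}(V_{\mu+1}, \ldots, V_n)$. Since $\det M = 1$ and $\det A > 0$, we have $\det K = 1$ automatically, so $K \in \SO(n)$.

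The dilaton formula then combines two standard facts. First, the contraction $\epsilon(V_{\mu+1}, \ldots, V_n; V_{\mu+1}, \ldots, V_n)$ equals the Gram determinant $\det(V_i \cdot V_j)_{\mu < i, j \leq n}$, by direct expansion of the antisymmetrisation (or by Cauchy--Binet applied to the matrix with rows $V_{\mu+1}, \ldots, V_n$). Second, this Gram determinant is the squared volume $\prod_{j > \mu} A_{jj}^2$ of the parallelepiped spanned by $V_{\mu+1}, \ldots, V_n$, which telescopes to $(y_n/y_\mu)^2 = y_\mu^{-2}$.

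For the axions, I read off $x_{\mu\nu} = N_{\mu\nu} = (V_\mu \cdot e_\nu)/A_{\nu\nu}$. The key identity is a Schur-complement expansion of the $(n-\nu+1) \times (n-\nu+1)$ bordered-Gram determinant
\begin{equation*}
\epsilon(V_\mu, V_{\nu+1}, \ldots, V_n; V_\nu, V_{\nu+1}, \ldots, V_n) = V_\mu \cdot (V_\nu - P_\nu V_\nu) \cdot y_\nu^{-2},
\end{equation*}
which, together with $e_\nu = (V_\nu - P_\nu V_\nu)\, y_{\nu-1}/y_\nu$ and $A_{\nu\nu} = y_\nu/y_{\nu-1}$, yields $x_{\mu\nu} = y_{\nu-1}^2 \, \epsilon(V_\mu, V_{\nu+1}, \ldots, V_n; V_\nu, V_{\nu+1}, \ldots, V_n)$. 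The hardest step is organising this bordered-Gram identity cleanly; in practice the author likely mirrors the $p$-adic argument by induction on $n$, peeling off the bottom row via the rotation sending $V_n/\|V_n\|$ to the last standard basis vector and applying the induction hypothesis to the resulting $(n-1) \times (n-1)$ upper-left block.
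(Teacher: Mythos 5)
Your proof is correct, but it takes a genuinely different route from the paper's in the key computational step. The paper also starts from the relation $M M^{\T} = N A^2 N^{\T}$ (your $x_{\mu\nu} = (V_\mu \cdot e_\nu)/A_{\nu\nu}$ is the same equation in disguise), but it then solves the entries recursively, working backwards from the $(n,n)$ position, and collapses the resulting sums by means of the telescoping identity \eqref{eqn:telescope}, whose proof occupies the appendix and consists of a lengthy Laplace expansion of generalized Kronecker deltas. You instead identify the contraction $\epsilon(A_1,\dots;B_1,\dots)$ directly as the determinant $\det(B_j \cdot A_k)$, so that \eqref{eqn:realdilatons} becomes the statement that a Gram determinant is the squared volume $\prod_{j>\mu} A_{jj}^2$, and \eqref{eqn:axions} follows from the Schur-complement formula for a bordered Gram determinant, $\det\bigl(\begin{smallmatrix} a & v^{\T} \\ w & G\end{smallmatrix}\bigr) = \det(G)\,(a - v^{\T} G^{-1} w)$ with $a - v^{\T}G^{-1}w = V_\mu\cdot(V_\nu - P_\nu V_\nu)$. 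This replaces the appendix's index gymnastics with two standard linear-algebra facts and is arguably more transparent; the paper's version buys a self-contained derivation in the $\epsilon$-calculus that parallels its $p$-adic treatment. You also explicitly supply the existence (bottom-up Gram--Schmidt) and uniqueness (an upper-triangular orthogonal matrix with positive diagonal is the identity) arguments, which the paper states in the theorem but leaves implicit in its proof; both of your arguments are standard and sound. The only step you gesture at rather than carry out is the Schur-complement identity itself, but since it is classical I do not regard that as a gap.
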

The procedure of writing a real matrix $M$ in Iwasawa-form is tantamount to Gram-Schmidt orthogonalization of the $n$ row-vectors in $M$ for which there are recursive formulae. The orthogonal matrix $K$ consists of $n$ orthonormal row-vectors and the unit upper triangular matrix $N$ together with the normalization in $A$ then specifies the appropriate linear combinations of these row-vectors to build the row-vectors in $M$. Oftentimes in the literature, people denote the product of $A$ and $N$ as $R$ and speak about the QR-decomposition\footnote{This is in the case $M = KAN$, where $K=Q$ and $AN=R$.}.

A very quick way to arrive at the non-recursive formulae \eqref{eqn:axions} and \eqref{eqn:realdilatons} given above is by means of the UL-decomposition as done in \cite{householder2013theory}. The argument goes like this: Write $MM^{\T} = N A^2 N^{T}$. The right hand side is then a UL-decomposition of $M M^{\T}$ and the matrix elements of $A^2$ and $N$ must then be given by \eqref{eqn:etas} and \eqref{eqn:vs} respectively. As a complement to this method, we give a proof for the formulae above which doesn't rely on the UL-decomposition.

\begin{proof}
  We have the equality
  \begin{align}
    M M^{\T} = N A^2 N^{\T}
    \label{eqn:mm}
    .
  \end{align}
  To illustrate the idea behind the proof, we write out the right hand side explicitly for the case $n=4$
  \begin{align}
    \left(
    \begin{array}{cccc}
     y_{1}^2+\frac{x_{12}^2 y_{2}^2}{y_{1}^2}+\frac{x_{14}^2}{y_{3}^2}+\frac{x_{13}^2 y_{3}^2}{y_{2}^2} & \frac{x_{12} y_{2}^2}{y_{1}^2}+\frac{x_{14} x_{24}}{y_{3}^2}+\frac{x_{13} x_{23} y_{3}^2}{y_{2}^2} & \frac{x_{14} x_{34}}{y_{3}^2}+\frac{x_{13} y_{3}^2}{y_{2}^2} & \frac{x_{14}}{y_{3}^2} \\
     \frac{x_{12} y_{2}^2}{y_{1}^2}+\frac{x_{14} x_{24}}{y_{3}^2}+\frac{x_{13} x_{23} y_{3}^2}{y_{2}^2} & \frac{y_{2}^2}{y_{1}^2}+\frac{x_{24}^2}{y_{3}^2}+\frac{x_{23}^2 y_{3}^2}{y_{2}^2} & \frac{x_{24} x_{34}}{y_{3}^2}+\frac{x_{23} y_{3}^2}{y_{2}^2} & \frac{x_{24}}{y_{3}^2} \\
     \frac{x_{14} x_{34}}{y_{3}^2}+\frac{x_{13} y_{3}^2}{y_{2}^2} & \frac{x_{24} x_{34}}{y_{3}^2}+\frac{x_{23} y_{3}^2}{y_{2}^2} & \frac{x_{34}^2}{y_{3}^2}+\frac{y_{3}^2}{y_{2}^2} & \frac{x_{34}}{y_{3}^2} \\
     \frac{x_{14}}{y_{3}^2} & \frac{x_{24}}{y_{3}^2} & \frac{x_{34}}{y_{3}^2} & \frac{1}{y_{3}^2} \\
    \end{array}
    \right)
  \end{align}
  Starting from the $(4, 4)$ entry and working ``backwards'', i.e.\ proceeding as $(4, 4) \rightarrow (3, 4) \rightarrow (3, 3) \rightarrow (2, 4) \rightarrow \dots$, we notice that each equation is solvable in terms of variables that have previously been determined. The $(\mu, \mu)$-equation allows for determination of $y_{\mu-1}$ and the $(\mu, \nu)$ ($\mu < \nu$) allows for determination of $x_{\mu \nu}$ all in terms of known variables. We now carry this out for the general case.

  Matrix elements of the left- and right hand sides of \cref{eqn:mm} evaluate to
  \begin{align}
    \left( M M^{\T} \right)_{\mu \nu} = V_{\mu} \cdot V_{\nu} = \left( V_\mu \right)_A \left( V_\nu \right)^I \delta_I^A = \epsilon\left( V_{\mu}; V_{\nu} \right)
  \end{align}
  and (assuming $\mu < \nu$)
  \begin{align}
    \begin{aligned}
      \left( NA^2 N^{\T} \right)_{\mu \nu} ={}& \sum_{r=1}^n \sum_{s=1}^n N_{\mu r} \left( A^2 \right)_{rs} \left( N^{\T} \right)_{s \nu} = \sum_{r=1}^n \sum_{s=1}^n N_{\mu r} \frac{y_r^2}{y_{r-1}^2} \delta_{rs} N_{\nu s} = \\
      ={} & \sum_{r=1}^n N_{\mu r} \frac{y_r^2}{y_{r-1}^2} N_{\nu r} = \sum_{r=\nu}^n N_{\mu r} \frac{y_r^2}{y_{r-1}^2} N_{\nu r} \\
      ={} & x_{\mu \nu} \frac{y_\nu^2}{y_{\nu-1}^2} + \sum_{r=\nu+1}^n x_{\mu r} \frac{y_r^2}{y_{r-1}^2} x_{\nu r}
    \end{aligned}
  \end{align}
  respectively. Solving for $x_{\mu \nu}$ gives
  \begin{align}
    x_{\mu \nu} = \frac{y_{\nu-1}^2}{y_{\nu}^2} \left( \epsilon\left( V_{\mu}; V_{\nu}  \right) - \sum_{r=\nu+1}^n \frac{y_r^2}{y_{r-1}^2} x_{\mu r} x_{\nu r} \right)
    \label{eqn:collapsingsum}
    .
  \end{align}
  We assume that all $y_{\rho}$ for $\rho \geq \mu$, and $x_{\rho \sigma}$ for $\rho > \mu$, and $x_{\mu \sigma}$ for $\sigma > \nu$ have been found, and are of the form in \cref{eqn:realdilatons,eqn:axions}. The sum telescopes through the identity
  \begin{align}
    \frac{\epsilon\left( V_{\mu}, V_{r+1}, \dots, V_n; V_{\nu}, V_{r+1}, \dots, V_n \right)}{\epsilon\left( V_{r+1}, \dots, V_n; V_{r+1}, \dots, V_n \right)} - \frac{y_r^2}{y_{r-1}^2} x_{\mu r} x_{\nu r} = \frac{\epsilon\left( V_{\mu}, V_{r}, \dots, V_n; V_{\nu}, V_{r}, \dots, V_n \right)}{\epsilon\left( V_{r}, \dots, V_n; V_{r}, \dots, V_n \right)}
    \label{eqn:telescope}
  \end{align}
  which is proven in appendix \ref{app:proof}. Applying \eqref{eqn:telescope} to \eqref{eqn:collapsingsum} term by term starting with $r = n$ allows one to step down through the sum and obtain
  \begin{equation}
    x_{\mu \nu} = \frac{y_{\nu-1}^2}{y_{\nu}^2} \frac{\epsilon\left( V_{\mu}, V_{\nu+1}, \dots, V_n; V_{\nu}, V_{\nu+1}, \dots, V_n \right)}{\epsilon\left( V_{\nu+1}, \dots, V_n; V_{\nu+1}, \dots, V_n \right)} = y_{\nu-1}^2 \epsilon\left( V_{\mu}, V_{\nu+1}, \dots, V_n; V_{\nu}, V_{\nu+1}, \dots, V_n \right)
  \end{equation}
  which is exactly \eqref{eqn:axions}.
  
  The dilaton $y_{\mu-1}$ is found through the $(\mu, \mu)$-equation
  \begin{align}
    \begin{aligned}
      \left( NA^2 N^{\T} \right)_{\mu \mu} ={}& \sum_{r=1}^n \sum_{s=1}^n N_{\mu r} \left( A^2 \right)_{rs} \left( N^{\T} \right)_{s \mu} = \sum_{r=1}^n \sum_{s=1}^n N_{\mu r} \frac{y_r^2}{y_{r-1}^2} \delta_{rs} N_{\mu s} = \\
      ={} & \sum_{r=1}^n N_{\mu r} \frac{y_r^2}{y_{r-1}^2} N_{\mu r} = \sum_{r=\mu}^n N_{\mu r} \frac{y_r^2}{y_{r-1}^2} N_{\mu r} \\
      ={} & \frac{y_\mu^2}{y_{\mu-1}^2} + \sum_{r=\mu+1}^n x_{\mu r}^2 \frac{y_r^2}{y_{r-1}^2}
      .
    \end{aligned}
  \end{align}
  Solving for $y_{\mu-1}^{-2}$ gives
  \begin{align}
    y_{\mu-1}^{-2} = \frac{1}{y_{\mu}^2} \left( \epsilon\left( V_\mu; V_\mu \right) - \sum_{r=\mu+1}^n x_{\mu r}^2 \frac{y_r^2}{y_{r-1}^2} \right)
    .
  \end{align}
  We assume again that all ``lower'' variables are given of the form of \cref{eqn:realdilatons,eqn:axions}. The sum then telescopes through \cref{eqn:telescope} with $\nu = \mu$ in precisely the same way as above. The result is
  \begin{align}
    \begin{aligned}
      y_{\mu-1}^{-2} {}&= \frac{1}{y_{\mu}^2} \frac{\epsilon\left( V_\mu, V_{\mu+1}, \dots, V_n; V_\mu, V_{\mu+1}, \dots, V_n \right)}{\epsilon\left( V_{\mu+1}, \dots, V_n; V_{\mu+1}, \dots, V_n \right)} = \\
      &{}= \epsilon\left( V_\mu, V_{\mu+1}, \dots, V_n; V_\mu, V_{\mu+1}, \dots, V_n \right)
    \end{aligned}
  \end{align}
  which is exactly \cref{eqn:realdilatons}.
\end{proof}
\begin{remark}
  The matrix $K$ is given by solving equation \eqref{eqn:realiwasawa} for $K$.
\end{remark}

\section{Representation theoretic viewpoint}
Here we outline a quicker derivation for the norms of the dilatons relying on representation theory. I would like to thank Stephen D.\ Miller and Solomon Friedberg for helpful comments.

Throughout this section we assert that $k \in \{1, \dots, n-1\}$ and use the notation $\rats_\infty \equiv \reals$. We start with a definition.
\begin{defn}[Generalized Pl\"ucker coordinates]
  For a matrix $M \in \SL(n, F)$ over a field $F$, the $k$\textsuperscript{th} generalized Pl\"ucker coordinate $p_k(g) \in F^{\binom{n}{k}}$ is the vector consisting of all anti-leading minors of order $k$ of $M$.
\end{defn}
See \cite{gel1987combinatorial,fomin2000recognizing,friedberg1987poincare,goldfeld2006automorphic} for general definitions and some interesting properties.

Let $\omega_k$ denote the fundamental weights of $\SL(n)$ and $(\rho_k, V_k)$ denote the associated fundamental representation $\rho_k$ with highest weight $\omega_k$ and representation space $V_k$. These are algebraic representation and hence make sense both for the real and $p$-adic cases. Furthermore, let $v_k \in V_k$ denote the highest weight vector. One then has \cite{fomin2000recognizing}
\begin{equation}
  \rho_k\left( g^{-1} \right) v_k = p_{n-k}(g)
  .
\end{equation}

Let $||\cdot||_p$ for $p \leq \infty$ and arbitrary $N \in \nats$ denote the vector norm on $\rats_p^{N}$ which is invariant under the maximal compact subgroup of $\SL(N, \rats_p)$.  More explicitly, in the $p$-adic case $p < \infty$ we have
\begin{equation}
  ||k v||_p = ||v||_p \equiv \max_{i=1}^n |v_i|_p, \quad p<\infty
\end{equation}
for a matrix $k \in \SL(n, \ints_p)$ and $v \in \rats_p^n$ and in the real case $p = \infty$
\begin{equation}
  ||k v||_\infty = ||v||_\infty \equiv \sqrt{\sum_{i=1}^n v^2_i}
\end{equation}
for a matrix $k \in \SO(n)$ and $v \in \reals^n$.
Furthermore for a group element $g \in \SL(n, \rats_p)$ with $p\leq \infty$ written in Iwasawa-form $g=nak$, one may show that
\begin{equation}
  ||\rho_k(g^{-1}) v_k||_p = ||\rho_k(k^{-1})\rho_k(a^{-1})\underbrace{\rho_k(n^{-1}) v_k}_{v_k}||_p = ||\rho_k(a^{-1}) v_k||_p = |y_k|_p^{-1}, \quad p\leq\infty
  .
\end{equation}
We therefore have
\begin{equation}
  |y_k|_p = ||p_{n-k}(g)||_{p}^{-1} , \quad p\leq\infty
\end{equation}
which is precisely \eqref{eqn:min} for the $p$-adic case $p < \infty$. That this formula agrees with \eqref{eqn:realdilatons} in the real case $p=\infty$ is an indirect proof of the following lemma.
\begin{lemma}
  Given a matrix $M \in \GL(n, F)$ over a field $F$, denote the row $n$-vectors in $M$ by $V_i$ for $i\in\{1, \dots, n\}$. One then has the identity
  \begin{align}
    \epsilon\left( V_{k+1}, \dots, V_{n}; V_{k+1}, \dots, V_{n} \right) = \sum_{\sigma \in \Theta_{n}^{n-k}}
  \left(
  M\left(
  \begin{smallmatrix}
    k+1 & \dots & n \\
    \sigma(1) & \dots & \sigma(n-k) \\
  \end{smallmatrix}
  \right)
  \right)^{2}
  \end{align}
\end{lemma}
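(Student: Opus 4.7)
The plan is to recognize the left-hand side as a Gram determinant and then apply the Cauchy–Binet formula.

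First, I would unpack the generalized Kronecker delta to rewrite the left-hand side explicitly. Using the defining expansion $\delta^{i_1 \cdots i_{n-k}}_{a_1 \cdots a_{n-k}} = \sum_{\pi \in S_{n-k}} \mathrm{sgn}(\pi) \, \delta^{i_1}_{a_{\pi(1)}} \cdots \delta^{i_{n-k}}_{a_{\pi(n-k)}}$ and contracting with the row vectors, one obtains
\begin{equation*}
\epsilon(V_{k+1},\dots,V_n;V_{k+1},\dots,V_n) = \sum_{\pi \in S_{n-k}} \mathrm{sgn}(\pi) \prod_{j=1}^{n-k} \bigl( V_{k+j} \cdot V_{k+\pi(j)} \bigr) = \det\bigl(B B^{\T}\bigr),
\end{equation*}
where $B$ is the $(n-k) \times n$ submatrix of $M$ whose rows are $V_{k+1},\dots,V_n$. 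This is just the statement that the totally antisymmetric contraction of two copies of the same vectors produces their Gram determinant.

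Next I would invoke the Cauchy–Binet formula, which expresses $\det(BB^{\T})$ as a sum of squares of maximal minors of $B$:
\begin{equation*}
\det(BB^{\T}) = \sum_{\sigma \in \Theta_{n-k}^{n}} \bigl( \det B_{\sigma} \bigr)^{2},
\end{equation*}
where $B_\sigma$ denotes the $(n-k) \times (n-k)$ submatrix of $B$ obtained by selecting the columns indexed by the ordered subset $\sigma$. By the way $B$ was constructed, $\det B_\sigma$ is precisely the anti-leading minor
\begin{equation*}
M\!\left(\begin{smallmatrix} k+1 & \dots & n \\ \sigma(1) & \dots & \sigma(n-k) \end{smallmatrix}\right),
\end{equation*}
so substituting reproduces the right-hand side of the lemma. (The indexing set $\Theta_n^{n-k}$ in the lemma statement should be read as $\Theta_{n-k}^n$, the set of ordered $(n-k)$-subsets of $\{1,\dots,n\}$.)

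There is no real obstacle here beyond bookkeeping; the only thing to be careful about is keeping the sign conventions straight when unpacking the generalized Kronecker delta, and verifying that the summation range on the right matches the set of ordered $(n-k)$-subsets of column indices so that each minor appears exactly once (rather than $(n-k)!$ times). Note that while the representation-theoretic discussion motivating the lemma assumes $F \subseteq \reals$ so that $\|\cdot\|_\infty$ makes sense, the identity itself is a purely algebraic determinantal identity and holds over any field, as both sides are polynomial expressions in the entries of $M$.
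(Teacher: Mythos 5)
Your proof is correct, and it takes a genuinely different and more direct route than the paper. The paper does not actually supply a self-contained argument: its proof reads ``indirect by the results above, or by direct calculation using induction and similar techniques as in appendix A,'' i.e.\ it either deduces the identity from the consistency of the representation-theoretic formula $|y_k|_\infty = ||p_{n-k}(g)||_\infty^{-1}$ with the explicit real Iwasawa formula \eqref{eqn:realdilatons}, or defers to an unwritten induction in the style of the appendix. Your observation that $\epsilon(V_{k+1},\dots,V_n;V_{k+1},\dots,V_n)$ is exactly the Gram determinant $\det(BB^{\T})$ of the bottom $(n-k)\times n$ block of $M$, followed by Cauchy--Binet, replaces both options with a two-step standard argument. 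What this buys is twofold: it is self-contained (the paper's indirect route presupposes the whole machinery of the real Iwasawa decomposition and the $K$-invariant norm, which only makes sense for $F=\reals$), and it manifestly establishes the identity over an arbitrary field, since Cauchy--Binet is a polynomial identity valid over any commutative ring -- exactly the generality in which the lemma is stated. Your reading of $\Theta_n^{n-k}$ as $\Theta_{n-k}^{n}$ (ordered $(n-k)$-subsets of $\{1,\dots,n\}$, consistent with the notation of Theorem \ref{thm:piwasawa}) is the right correction of what is evidently a typo. The only cosmetic point: since $\det(BB^{\T})=\det(G)$ with $G_{ij}=V_{k+i}\cdot V_{k+j}$ symmetric, your unpacking of the generalized Kronecker delta is fine as written and no sign issues actually arise.
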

\begin{proof}
  Indirect by the results above, or by direct calculation using induction and similar techniques as in appendix \ref{app:proof}.
\end{proof}

\section*{Acknowledgements}
I would like to thank Axel Kleinschmidt for helpful discussions over the course of this work. I am supported by the Erasmus Mundus Joint Doctorate Program by grant number 2013-1471 from the EACEA of the European Commission.

\appendix
\section{\label{app:proof}Proving \eqref{eqn:telescope}}
After putting in explicit expressions for the axions and dilatons, proving \cref{eqn:telescope} is equivalent to proving
\begin{align}
  \begin{aligned}
    X\equiv {} &\epsilon\left( V_{\mu}, V_{r}, \dots, V_n; V_{\nu}, V_{r}, \dots, V_n \right) \epsilon\left( V_{r+1}, \dots, V_n; V_{r+1}, \dots, V_n \right) = \\
    ={} &\epsilon\left( V_{\mu}, V_{r+1}, \dots, V_n; V_{\nu}, V_{r+1}, \dots, V_n \right) \epsilon\left( V_{r}, \dots, V_n; V_{r}, \dots, V_n \right) \\
    -{} & \epsilon\left( V_{\mu}, V_{r+1}, \dots, V_n; V_{r}, \dots, V_n \right) \epsilon\left( V_{\nu}, V_{r+1}, \dots, V_n; V_{r}, \dots, V_n \right) \equiv Y - Z
  \end{aligned}
\end{align}
To this end, one may use Laplace expansion for the generalized Kronecker delta
\begin{align}
  \delta{}_{A}^{I}{}_{a_1}^{i_1}{}_{\dash}^{\dash}{}_{a_m}^{i_m} = \delta_{A}^{I} \delta{}_{a_1}^{i_1}{}_{\dash}^{\dash}{}_{a_m}^{i_m} - \sum_{k=1}^m \delta_A^{i_k} \delta{}_{a_1}^{i_1}{}_{\dash}^{\dash}{}_{a_{k-1}}^{i_{k-1}}{}_{a_k}^{I}{}_{a_{k+1}}^{i_{k+1}}{}_{\dash}^{\dash}{}_{a_m}^{i_m} = \delta_{A}^{I} \delta{}_{a_1}^{i_1}{}_{\dash}^{\dash}{}_{a_m}^{i_m} - \sum_{k=1}^m \delta_{a_k}^{I} \delta{}_{a_1}^{i_1}{}_{\dash}^{\dash}{}_{a_{k-1}}^{i_{k-1}}{}_{A}^{i_{k}}{}_{a_{k+1}}^{i_{k+1}}{}_{\dash}^{\dash}{}_{a_m}^{i_m}
  .
\end{align}
In what follows, we will not write out the $V$'s. The expressions are assumed to be fully contracted with vectors corresponding to the indices. For example, an index $a_{r+1}$ is assumed to be contracted with $\left( V_{r+1} \right)^{a_{r+1}}$ and in particular the index $A$ is contracted with $\left( V_{\mu} \right)^A$ and $I$ is contracted with $\left( V_{\nu} \right)_{I}$. We get
\begin{align}
  \begin{aligned}
    X&{}= \delta{}_{A}^{I}{}_{a_{r}}^{i_{r}}{}_{\dash}^{\dash}{}_{a_{n}}^{i_{n}} \delta{}_{b_{r+1}}^{j_{r+1}}{}_{\dash}^{\dash}{}_{b_{n}}^{j_{n}} = \\
    &{}= \left( \delta_A^I \delta{}_{a_{r}}^{i_{r}}{}_{\dash}^{\dash}{}_{a_{n}}^{i_{n}} - \sum_{k=r}^{n} \delta_A^{i_k} \delta{}_{a_{r}}^{i_{r}}{}_{\dash}^{\dash}{}_{a_{k-1}}^{i_{k-1}}{}_{a_{k}}^{I}{}_{a_{k+1}}^{i_{k+1}}{}_{\dash}^{\dash}{}_{a_{n}}^{i_{n}} \right) \delta{}_{b_{r+1}}^{j_{r+1}}{}_{\dash}^{\dash}{}_{b_{n}}^{j_{n}} = \\
    &{}= \delta{}_{a_{r}}^{i_{r}}{}_{\dash}^{\dash}{}_{a_{n}}^{i_{n}}\left( \delta{}_{A}^{I}{}_{b_{r+1}}^{j_{r+1}}{}_{\dash}^{\dash}{}_{b_{n}}^{j_{n}} + \sum_{k=r+1}^{n} \delta_{A}^{j_k} \delta{}_{b_{r+1}}^{j_{r+1}}{}_{\dash}^{\dash}{}_{b_{k-1}}^{j_{k-1}}{}_{b_{k}}^{I}{}_{b_{k+1}}^{j_{k+1}}{}_{\dash}^{\dash}{}_{b_n}^{j_n}  \right) \\
    &{}- \sum_{k=r}^{n} \delta{}_{a_{r}}^{i_{r}}{}_{\dash}^{\dash}{}_{a_{k-1}}^{i_{k-1}}{}_{a_{k}}^{I}{}_{a_{k+1}}^{i_{k+1}}{}_{\dash}^{\dash}{}_{a_{n}}^{i_{n}} \left( \delta{}_{A}^{i_k}{}_{b_{r+1}}^{j_{r+1}}{}_{\dash}^{\dash}{}_{b_{n}}^{j_{n}} + \sum_{l=r+1}^{n} \delta_{A}^{j_l} \delta{}_{b_{r+1}}^{j_{r+1}}{}_{\dash}^{\dash}{}_{b_{l-1}}^{j_{l-1}}{}_{b_{l}}^{i_k}{}_{b_{l+1}}^{j_{l+1}}{}_{\dash}^{\dash}{}_{b_{n}}^{j_{n}} \right) = \\
    &{}= Y - Z + \delta{}_{a_{r}}^{i_{r}}{}_{\dash}^{\dash}{}_{a_{n}}^{i_{n}} \sum_{k=r+1}^{n} \delta_{A}^{j_k} \delta{}_{b_{r+1}}^{j_{r+1}}{}_{\dash}^{\dash}{}_{b_{k-1}}^{j_{k-1}}{}_{b_{k}}^{I}{}_{b_{k+1}}^{j_{k+1}}{}_{\dash}^{\dash}{}_{b_n}^{j_n} \\
    &{}- \delta{}_{a_{r}}^{I}{}_{a_{r+1}}^{i_{r+1}}{}_{\dash}^{\dash}{}_{a_{n}}^{i_{n}} \sum_{l=r+1}^{n} \delta_{A}^{j_l} \delta{}_{b_{r+1}}^{j_{r+1}}{}_{\dash}^{\dash}{}_{b_{l-1}}^{j_{l-1}}{}_{b_{l}}^{i_r}{}_{b_{l+1}}^{j_{l+1}}{}_{\dash}^{\dash}{}_{b_{n}}^{j_{n}} \\
    &{}- \sum_{k=r+1}^{n} \delta{}_{a_{r}}^{i_{r}}{}_{\dash}^{\dash}{}_{a_{k-1}}^{i_{k-1}}{}_{a_{k}}^{I}{}_{a_{k+1}}^{i_{k+1}}{}_{\dash}^{\dash}{}_{a_{n}}^{i_{n}} \delta_{A}^{j_k} \delta{}_{b_{r+1}}^{j_{r+1}}{}_{\dash}^{\dash}{}_{b_{k-1}}^{j_{k-1}}{}_{b_{k}}^{i_k}{}_{b_{k+1}}^{j_{k+1}}{}_{\dash}^{\dash}{}_{b_{n}}^{j_{n}} = \\
    &{}
    \begin{aligned}
      = Y - Z + \sum_{k=r+1}^n \delta_A^{j_k} \Big(  &\delta{}_{a_r}^{i_r}{}_{\dash}^{\dash}{}_{a_n}^{i_n} && \delta{}_{b_k}^{I}{}_{b_{r+1}}^{j_{r+1}}{}_{\dash}^{\dash}{}_{b_{k-1}}^{j_{k-1}}{}_{b_{k+1}}^{j_{k+1}}{}_{\dash}^{\dash}{}_{b_n}^{j_n} \\
      -{}& \delta{}_{b_{r+1}}^{j_{r+1}}{}_{\dash}^{\dash}{}_{b_{k-1}}^{j_{k-1}}{}_{b_k}^{i_r}{}_{b_{k+1}}^{j_{k+1}}{}_{\dash}^{\dash}{}_{b_n}^{j_n} && \delta{}_{a_r}^{I}{}_{a_{r+1}}^{i_{r+1}}{}_{\dash}^{\dash}{}_{a_n}^{i_{n}} \\
      -{}& \delta{}_{b_{r+1}}^{i_{r+1}}{}_{\dash}^{\dash}{}_{b_{n}}^{i_{n}} && \delta{}_{a_{k}}^{I}{}_{a_{r}}^{j_{r}}{}_{\dash}^{\dash}{}_{a_{k-1}}^{j_{k-1}}{}_{a_{k+1}}^{j_{k+1}}{}_{\dash}^{\dash}{}_{a_{n}}^{j_{n}}  \Big) \equiv
    \end{aligned} \\
    &{}\equiv Y - Z + \sum_{k=r+1}^{n} \delta_{A}^{j_k} \left( R_1 - R_2 - R_3 \right)
    .
  \end{aligned}
\end{align}
The job is now to show that the remainder vanishes. We expand $R_1$
\begin{align}
  \begin{aligned}
    R_1 &{}= \delta{}_{a_{r}}^{i_{r}}{}_{\dash}^{\dash}{}_{a_{n}}^{i_{n}} \delta{}_{b_{k}}^{I}{}_{b_{r+1}}^{j_{r+1}}{}_{\dash}^{\dash}{}_{b_{k-1}}^{j_{k-1}}{}_{b_{k+1}}^{j_{k+1}}{}_{\dash}^{\dash}{}_{b_{n}}^{j_n} = \\
    &{}= \left( \delta_{a_{r}}^{i_{r}} \delta{}_{a_{r+1}}^{i_{r+1}}{}_{\dash}^{\dash}{}_{a_{n}}^{i_{n}} - \sum_{l=r+1}^n \delta_{a_{r}}^{i_{l}} \delta{}_{a_{r+1}}^{i_{r+1}}{}_{\dash}^{\dash}{}_{a_{l-1}}^{i_{l-1}}{}_{a_{l}}^{i_{r}}{}_{a_{l+1}}^{i_{l+1}}{}_{\dash}^{\dash}{}_{a_{n}}^{i_{n}} \right) \delta{}_{b_{k}}^{I}{}_{b_{r+1}}^{j_{r+1}}{}_{\dash}^{\dash}{}_{b_{k-1}}^{j_{k-1}}{}_{b_{k+1}}^{j_{k+1}}{}_{\dash}^{\dash}{}_{b_{n}}^{j_n} = \\
    &{}= \delta{}_{a_{r+1}}^{i_{r+1}}{}_{\dash}^{\dash}{}_{a_{n}}^{i_{n}} \Bigg( \underbrace{\delta{}_{a_{r}}^{i_{r}}{}_{b_{k}}^{I}{}_{b_{r+1}}^{j_{r+1}}{}_{\dash}^{\dash}{}_{b_{k-1}}^{j_{k-1}}{}_{b_{k+1}}^{j_{k+1}}{}_{\dash}^{\dash}{}_{b_{n}}^{j_n}}_{R_3} + \underbrace{\delta_{b_k}^{i_r} \delta{}_{a_{r}}^{I}{}_{b_{r+1}}^{j_{r+1}}{}_{\dash}^{\dash}{}_{b_{k-1}}^{j_{k-1}}{}_{b_{k+1}}^{j_{k+1}}{}_{\dash}^{\dash}{}_{b_{n}}^{j_n}}_{\text{I}} + \\
    &{}+ \underbrace{\sum_{m=r+1}^{k-1} \delta_{b_m}^{i_r} \delta{}_{a_{r}}^{I}{}_{b_{r+1}}^{j_{r+1}}{}_{\dash}^{\dash}{}_{b_{m-1}}^{j_{m-1}}{}_{a_r}^{j_{m}}{}_{b_{m+1}}^{j_{m+1}}{}_{\dash}^{\dash}{}_{b_{k-1}}^{j_{k-1}}{}_{b_{k+1}}^{j_{k+1}}{}_{\dash}^{\dash}{}_{b_{n}}^{j_n}}_{\text{II}} + \underbrace{\sum_{m=r+1}^{k-1} \delta_{b_m}^{i_r} \delta{}_{a_{r}}^{I}{}_{b_{r+1}}^{j_{r+1}}{}_{\dash}^{\dash}{}_{b_{k-1}}^{j_{k-1}}{}_{b_{k+1}}^{j_{k+1}}{}_{\dash}^{\dash}{}_{b_{m-1}}^{j_{m-1}}{}_{a_r}^{j_{m}}{}_{b_{m+1}}^{j_{j+1}}{}_{\dash}^{\dash}{}_{b_{n}}^{j_n}}_{\text{III}}  \Bigg) \\
    &{}- \sum_{l=r+1}^n \delta{}_{a_{r+1}}^{i_{r+1}}{}_{\dash}^{\dash}{}_{a_{l-1}}^{i_{l-1}}{}_{a_{l}}^{i_{r}}{}_{a_{l+1}}^{i_{l+1}}{}_{\dash}^{\dash}{}_{a_{n}}^{i_{n}} \Bigg(  \underbrace{\delta{}_{a_{r}}^{i_{l}}{}_{b_{k}}^{I}{}_{b_{r+1}}^{j_{r+1}}{}_{\dash}^{\dash}{}_{b_{k-1}}^{j_{k-1}}{}_{b_{k+1}}^{j_{k+1}}{}_{\dash}^{\dash}{}_{b_{n}}^{j_n}}_{R_2} + \underbrace{\delta_{b_{k}}^{i_{l}} \delta{}_{a_{r}}^{}{}_{b_{r+1}}^{I}{}_{\dash}^{j_{r+1}}{}_{b_{k-1}}^{\dash}{}_{b_{k+1}}^{j_{k-1}}{}_{\dash}^{j_{k+1}}{}_{b_{n}}^{\dash j_n}}_{\text{I}} + \\
    &{}+ \underbrace{\sum_{m=r+1}^{k-1} \delta_{b_m}^{i_l} \delta{}_{b_{k}}^{I}{}_{b_{r+1}}^{j_{r+1}}{}_{\dash}^{\dash}{}_{b_{m-1}}^{j_{m-1}}{}_{a_{r}}^{j_{m}}{}_{b_{m+1}}^{j_{m+1}}{}_{\dash}^{\dash}{}_{b_{k-1}}^{j_{k-1}}{}_{b_{k+1}}^{j_{k+1}}{}_{\dash}^{\dash}{}_{b_{n}}^{j_n}}_{\text{II}} + \underbrace{\sum_{m=k+1}^{n} \delta_{b_m}^{i_l} \delta{}_{b_{k}}^{I}{}_{b_{r+1}}^{j_{r+1}}{}_{\dash}^{\dash}{}_{b_{k-1}}^{j_{k-1}}{}_{b_{k+1}}^{j_{k+1}}{}_{\dash}^{\dash}{}_{b_{m-1}}^{j_{m-1}}{}_{a_{r}}^{j_{m}}{}_{b_{m+1}}^{j_{m+1}}{}_{\dash}^{\dash}{}_{b_{n}}^{j_n}}_{\text{III}} \Bigg)
    .
  \end{aligned}
\end{align}
The terms marked I, II and III cancel out by writing
\begin{align}
  \delta_{b_k}^{i_r}\delta{}_{a_{r+1}}^{i_{r+1}}{}_{\dash}^{\dash}{}_{a_{n}}^{i_{n}} = \delta{}_{b_k}^{i_r}{}_{a_{r+1}}^{i_{r+1}}{}_{\dash}^{\dash}{}_{a_{n}}^{i_{n}} + \sum_{l=r+1}^{n} \delta_{b_k}^{i_l}\delta{}_{a_{r+1}}^{i_{r+1}}{}_{\dash}^{\dash}{}_{a_{l-1}}^{i_{l-1}}{}_{a_{l}}^{i_r}{}_{a_{l+1}}^{i_{l+1}}{}_{\dash}^{\dash}{}_{a_n}^{i_n} = \sum_{l=r+1}^{n} \delta_{b_k}^{i_l}\delta{}_{a_{r+1}}^{i_{r+1}}{}_{\dash}^{\dash}{}_{a_{l-1}}^{i_{l-1}}{}_{a_{l}}^{i_r}{}_{a_{l+1}}^{i_{l+1}}{}_{\dash}^{\dash}{}_{a_n}^{i_n}
\end{align}
and similarly for terms II and III with $\delta_{b_m}^{i_r}\delta{}_{a_{r+1}}^{i_{r+1}}{}_{\dash}^{\dash}{}_{a_{n}}^{i_{n}}$.

\bibliography{bib/references}{}
\bibliographystyle{utphys}

\end{document}